%
%
%
%
%
%
%
\documentclass[%
 aip,
 amsmath,amssymb,
 reprint,%
]{revtex4-1}

\usepackage{amsthm}
\usepackage{enumerate}

\usepackage{tikz}
\usetikzlibrary{positioning}

\usepackage{graphicx}
\usepackage{dcolumn}
\usepackage{bm}

\usepackage[utf8]{inputenc}
\usepackage[T1]{fontenc}
\usepackage{mathptmx}

\newtheorem{prop}{Proposition}

\newtheorem*{thm*}{Theorem}

\newtheorem{definition}{Definition}

\begin{document}

\preprint{AIP/123-QED}

\title[Rethinking the Definition of Rate-Induced Tipping]{Rethinking the Definition of Rate-Induced Tipping}

\author{Alanna Hoyer-Leitzel}
\email{ahoyerle@mtholyoke.edu}
 \affiliation{Department of Mathematics and Statistics, Mount Holyoke College, South Hadley, MA 01075 USA}
\author{Alice N Nadeau}
\email{a.nadeau@cornell.edu}
\affiliation{Department of Mathematics, Cornell University, Ithaca, NY 14853, USA}

\date{\today}

\begin{abstract}
The current definition of rate-induced tipping is tied to the idea of a pullback attractor limiting  in forward and backward time to a stable quasi-static equilibrium.  Here we propose a new definition that encompasses the standard definition in the literature for certain scalar systems and includes previously excluded $N$-dimensional systems that exhibit rate-dependent critical transitions.
\end{abstract}

\maketitle

\begin{quotation}
Tipping points and critical transitions, characterized by qualitative changes in a system due to small changes in its parameters,\cite{lenton2008,scheffer2009} have  been topics of interest to  both the scientific community and the public, especially in the last two decades.  Reasons for this interest perhaps stems from the wide variety of chemical, biological, physical, and societal systems which exhibit such behavior.  Examples include organ failure, desertification, and  runaway ice cover.\cite{An2012,Chen2015,Lenton2013}
In mathematical models, tipping can be caused by different mechanisms, including the rate at which parameters are changing, called \emph{rate-induced tipping}.\cite{Ashwin2012}  
Here we discuss several examples of systems exhibiting rate-induced critical transitions but which fall outside of the standard set of assumptions that encompass the theoretical developments of the rate-induced tipping field. 
As such, one cannot technically call the transitions we observe in these systems ``rate-induced tipping.'' 
In light of these examples, we propose a new definition of rate-induced tipping as the loss of forward asymptotic stability of a nonautonomous attractor.  Encouragingly, this new definition is equivalent to the current definition for asymptotically constant scalar systems\cite{Kuehn2020} although it is not known if it generalizes to asymptotically constant $N$-dimensional systems or $N$-dimensional systems with other types of parameter change.
\end{quotation}

\section{Introduction}

Tipping points in the scientific literature are characterized by a sudden, qualitative shift in the behavior or state of the system due to a relatively small change in inputs.\cite{lenton2008,scheffer2009}  
In natural systems this behavior can have a variety of underlying causes, such as changing external conditions or self-excitation.  For example, nutrient run-off from agricultural practices can cause downstream lakes to ``tip'' from a healthy state to a eutrophic state resulting in excessive algae growth and death of animal life.  In mathematical models, tipping may be caused by a bifurcation due to a change in parameter values, noise in the system, or the rate at which parameters are changing.\cite{Ashwin2012}  This last cause, called rate-induced tipping (sometimes \emph{rate-dependent tipping}, \emph{rate tipping},  or simply \emph{R-tipping}), can tip a system to a different state even when the parameter change does not pass through a bifurcation point of the system.

The mathematical study of rate-induced tipping has almost exclusively been concerned with deterministic, continuous time systems of the form 
\begin{align}
\dot x=f(x,\lambda(rt),t),\quad x\in\mathbb R^N, \ \lambda(rt)\in\mathbb R^M
\label{EQ-system}
\end{align}
especially where $N=1$ and $M=1$. Although recent research has been conducted on understanding rate-induced tipping in noisy systems\cite{Ritchie2016, Ritchie2017} or time-dependent maps,\cite{Kiers2020} here we will focus only on deterministic, continuous time systems.  Finding rate-induced tipping in system \eqref{EQ-system} amounts to varying the parameter $r$, referred to as the \emph{rate parameter}, and assessing the system's sensitivity to the change of the parameter $\lambda(rt)$ in time. 

It is natural to compare the time-dependent system \eqref{EQ-system} with the corresponding family of autonomous systems $\dot{x}=f(x,\lambda)$ where $\lambda$ is constant. Although recent studies have considered tipping in systems with periodic orbits,\cite{Alkhayoun2018,Hahn2017,Okeefe2020} it is standard to assume that equilibria of the corresponding autonomous system are hyperbolic.  In the hyperbolic context, one may consider the curve of equilibria $x_*(\lambda(rs))$ generated by the stationary system $\dot x=f(x,\lambda(rs))$ as $s$ is varied,  called a \emph{quasi-static equilibrium} or \emph{QSE}.  

The definition of rate-induced tipping has evolved over time as mathematicians try to describe the phenomenon rigorously.  Rate tipping was originally determined by how close a trajectory remained to a stable QSE: trajectories that left a predetermined radius around a stable QSE had tipped.\cite{Ashwin2012}  This seemed like a natural definition as several early examples were found where the boundary of the basin of attraction stayed a constant distance away from the stable QSE. However, in other systems, solutions may drift away from the stable QSE as the rate parameter $r$ is varied without having a critical transition,\cite{siteur2016ecosystems} leaving the definition lacking.  In Section \ref{Section-drift}, we will discuss a system where solutions exhibit this drifting behavior relative to the stable QSE.

The current working definition of rate tipping also relies on behavior of trajectories of the system in relation to the system's stable QSEs.\cite{Ashwin2017,Kiers2019} In systems where the change in the parameter $\lambda(rt)$ is asymptotically constant---limiting to $\lambda_{\pm}$when $t\to\pm\infty$---the definition of rate-induced tipping relies on the limiting behavior of (local) pullback attractors of the system.  A curve $\gamma_P(t)$ is (local) pullback attracting if trajectories $x(t;x_0,t_0)$ get closer to $\gamma_P(t)$ as the initial condition $x_0$ is started further back in time, i.e. for $x_0$ in some ball around $\gamma_P(t)$,
\begin{align}
\lim_{s\to-\infty}|x(t;x_0,s)-\gamma_P(t)|=0 \, \, \text{ for all } t.
\label{EQ-pullback-attract}
\end{align}
Ashwin, Perryman, and Wieczorek show that for each stable hyperbolic equilibrium of the autonomous system $\dot x=f(x,\lambda_-)$, there is a corresponding solution $x_P(t)$ to \eqref{EQ-system} which is local pullback attracting and converges to the stable QSE limit $x_*(\lambda_-)$ as $t\to-\infty$.\cite{Ashwin2017}  If the local pullback attractor converges to the stable QSE limit $x_*(\lambda_+)$ as $t\to\infty$, the solution is said to \emph{end-point track} the QSE. 
Rate-induced tipping is said to occur if the local pullback attractor $x_P(t)$ does not end-point track the stable QSE, i.e. does not limit to the forward limit of the corresponding equilibrium $x_*(\lambda_+)$ as $t\to\infty$ as the rate parameter $r$ is varied.  

One helpful characterization of nontipping for asymptotically constant scalar systems is \emph{forward basin stability}, when the minimal closed set in state space  containing the values of the curve traced out by the stable QSE up to time $s$ is in the basin of attraction of $x_*(\lambda(s))$ for all $s\in\mathbb R$.  Pullback attractors of scalar systems which are forward basin stable will always end-point track their respective stable QSE, and thus do not experience rate-induced tipping. The $N$-dimensional ($N>1$) generalization of forward basin stability is \emph{forward inflowing stable}.\cite{Kiers2019} A detailed discussion of forward inflowing systems follows in Section~\ref{Section-n-dim}. However, much like end-point tracking, the ideas of foward basin stability or foward inflowing stability are (currently) defined only for asymptotically constant parameter changes. 

Here we consider systems with locally bounded parameter change, where $\lambda(rt)$ is bounded on any finite time interval and smooth enough to ensure existence and uniqueness of solutions. This includes the type of parameter change seen in tipping studies where $\lambda(rt)$ is asymptotically constant and also systems where $\lambda(rt)$ may be unbounded when considered on the whole real line, such as $\lambda(rt)=rt$. In the context of locally bounded parameter change, we say a solution $x(t;x_0,t_0)$ \emph{end-point tracks} a quasistatic equilibrium $Q(t)$ if
\[\lim_{s\to\infty}|x(s;x_0,t_0)-Q(s)|=0.\]
This is a slight generalization of the current definition of end-point tracking 
 to account for solutions and QSEs which may have infinite limits; however, this definition is equivalent to the current one when the limit of the QSE is finite.

In the context of locally bounded parameter change, we find that the definition of rate-induced tipping as a pullback attractor not end-point tracking a stable QSE is too restrictive.  In Section \ref{Section-unbounded}, we give several examples demonstrating that the long term behavior of solutions relative to a the (local) stable QSE does not impact whether the system has a critical transition caused by varying the rate parameter $r$, including an example that is forward basin stable but does not endpoint-track a stable QSE. Because of this behavior, it may be more appropriate to define rate-induced tipping in terms of the loss of forward attraction of the (local) pullback attractor.  Recent advances in the literature have shown that this loss of forward attraction is equivalent to end-point tracking for scalar equations with asymptotically constant parameter change\cite{Kuehn2020} and a specific family of systems where $f(x,\lambda(rt),t)$ is bounded for all $t\in\mathbb R$.\cite{Longo2021} Further, loss of forward attraction encompasses rate-induced tipping-like behavior in $N$-dimensional systems where the current definition is not applicable, as we will see in Section \ref{Section-unbounded}. 

In nonautonomous systems, there are two types of attraction: pullback attraction and forward attraction.  In contrast with pullback attraction, a curve $\gamma_F(s)$ is  (local) forward attracting if trajectories $x(t;x_0,t_0)$ get closer to $\gamma_F(t)$ as they flow forward in time, i.e. for $x_0$ in some ball around $\gamma_F(s)$,
\begin{align}
\lim_{s\to\infty}|x(s;x_0,t_0)-\gamma_F(s)|=0 \, \, \text{ for all } t_0.
\label{EQ-forward-attract}
\end{align}
In autonomous systems, the pullback and forward definitions of attraction are equivalent; however, in nonautonomous systems pullback attractors need not be forward attracting and forward attractors need not be pullback attracting. \cite{Kloeden2011,Langa2006}  When an attractor is both forward and pullback attracting, it is called a \emph{uniform attractor}.\cite{Kloeden2011}

We propose a new definition of rate tipping as follows
\begin{definition}
\label{def-tipping}
System \eqref{EQ-system} undergoes \emph{rate-induced tipping} at $r=r^*$ when a (local) forward attracting pullback attractor $\gamma(t)$ of the system goes through a nonautonomous bifurcation at $r=r^*$ causing the pullback attractor to lose its forward attraction.
\end{definition} 
\noindent Note that for the bifurcation at $r=r^*$ in the proposed definition, the pullback attractor may remain in the system and only lose its forward stability \cite{Kuehn2020,Longo2021} or the pullback attractor may be annihilated in a collision with other hyperbolic globally defined solutions (solutions which exist for all time and whose variational equations exhibit an exponential dichotomy on $\mathbb R$).\cite{Coppel2006}  We see examples of  annihilation of the pullback attractor in Sections \ref{Section-SN}, \ref{Section-SN-local}, and \ref{Section-unbounded-n}.  
If there are multiple forward attracting pullback attractors in the system, varying the rate parameter $r$ to the critical value $r^*$ needn't cause all pullback attractors to lose  their forward attraction.  In Section \ref{Section-SN-local}, we show an example of the localness of the induced bifurcation.

The remainder of this note is laid out as follows.  In Section~\ref{Section-asymptotic} we discuss asymptotically bounded systems.  In Sections~\ref{Section-KL} and \ref{Section-Longo-etal} we briefly summarize recent papers showing that in some scalar systems the definition for rate-induced tipping proposed in Definition \ref{def-tipping} above is equivalent to the current definition of rate-induced tipping used in the literature.\cite{Kuehn2020} In Section~\ref{Section-n-dim}, we prove that the conditions that guarantee that the asymptotically constant $N$-dimensional system end-point tracks for all $r>0$ also guarantee that the pullback attractor is forward attracting for all $r>0$. 
In Section~\ref{Section-unbounded} we discuss systems with locally bounded parameter change.    
In Sections~\ref{Section-drift}, we present a scalar system with a pullback attractor that does not end-point track the stable but which also does not undergo a critical transition as the rate parameter is varied. In Sections~\ref{Section-SN}, \ref{Section-SN-local} and \ref{Section-unbounded-n}, we provide examples of systems which undergo rate-dependent critical transitions due to a pullback attractor losing its forward stability as the rate parameter is varied but which would not be called ``rate-induced tipping'' under the current definition because the pullback attractors  do not ``endpoint track" the stable QSEs for any value of the rate parameters. In Section~\ref{Section-proof}, we prove a more general statement encompassing the examples from Sections~\ref{Section-SN}, \ref{Section-SN-local}, and \ref{Section-unbounded-n}.

\section{Examples when $f(x,\lambda(rt),t)$ is Bounded for $t\in \mathbb R$}
\label{Section-asymptotic}

\subsection{Asymptotically Constant Scalar Equations}
\label{Section-KL}

Among the results of a recent paper by Kuehn and Longo is a theorem showing that for scalar systems with asymptotically constant parameter change, and under other standard assumptions used in the tipping literature, a pullback attractor does not endpoint track its associated stable QSE when it loses its forward asymptotic stability as the rate parameter is varied.\cite{Kuehn2020} The authors show that this occurs when the forward attracting pullback attractor collides with a pullback repeller at a critical value of $r=r^*$ defining the tipping point.

More precisely, the study assumes that $f$ in \eqref{EQ-system} has twice differentiable partial derivatives that are continuous and $x$ and $\lambda$ are both scalars
, i.e. $f\in\mathcal C^2(\mathbb R\times\mathbb R,\mathbb R)$. Additional assumptions include that the autonomous problem $\dot x=f(x,\lambda)$ has a stable hyperbolic equilibrium which depends continuously on $\lambda$ for all $\lambda\in[\lambda_-,\lambda_+]$,  that all other equilibria of the autonomous problem are hyperbolic for all $\lambda\in[\lambda_-,\lambda_+]$ except for at most finitely many points $\lambda\in(\lambda_-,\lambda_+)$, and that $\lambda(rt)$ is strictly increasing and contained in the interval $[\lambda_-,\lambda_+]$. 
Let
\begin{align*}
r^*:=\sup\{r>0 \ | \ &x^{\rho}_-(\cdot)\text{ is uniformly} \\
&\text{asymptotically stable for all }0<\rho\leq r\},
\end{align*}
 where $x^{\rho}_-(\cdot)$ is a pullback attractor limiting to a stable QSE in backward time. The authors show that then $x^{r}_-$ end-point tracks the respective curve of quasi-static equilibria for $0<r<r^*$ and $r^*<\infty$ if and only if $x^{r^*}_-$ is globally defined but not uniformly asymptotically stable and it does not end-point track the QSE (Kuehn and Longo, Theorem 3.6\cite{Kuehn2020}).  At this critical value of the rate parameter $r$, the forward attracting pullback attractor in the system collides with a forward repelling pullback repeller.

This result by Kuehn and Longo demonstrates that Definition 1 is equivalent to the current definition of rate-induced tipping for scalar systems under these assumptions. 

\subsection{Bounded Asymptotically Nonautonomous Scalar Equations}

\label{Section-Longo-etal}

Longo et al describe rate-induced tipping in a different class of systems
\[\dot x=-\left(x-\frac{2}{\pi}\text{arctan}(rt)\right)^2+p(t)\]
where $p(t)$ is bounded.\cite{Longo2021} Here they investigate behavior of the system due to different classes of functions $p(t)$.  In their conclusion, the authors note that extending their results by replacing $\text{arctan}(rt)$ with some other odd function $\lambda(rt)$ which is asymptotically constant is far from straightforward and remains an open question.

Among the results, the authors find that for the class of functions $p(t)$ where
\[\dot x=-x^2+p(t)\]
has exactly two hyperbolic solutions, there is a critical rate $r^*$ for which a local pullback attractor and a local pullback repeller come together and form a special globally defined, bounded solution which is attracting in the past and repelling in the future. When $r<r^*$, there exists a bounded, globally defined local pullback attractor which limits forward and backward in time to the attractors of the limiting systems
\[\dot x=-(x+1)^2+p(t)\]
and
\[\dot x=-(x-1)^2+p(t),\]
respectively.  Additionally, there is a bounded, globally defined, local pullback repeller of the full scalar equation which limits forward and backward in time to the repellers of the limiting systems.  Taking $r>r^*$ results in the loss of all globally defined solutions. Intriguingly, Longo et al show that the critical rate $r^*$ is not unique and that there can be several intervals in $r$-space with existence of the attractor/repeller pair or nonexistence of all bounded solutions. Finally, the authors show that rate-tipping occurs in this system if and only if there is a nonautonomous saddle node bifurcation, which is found in a linked system through a change of variables.  

\subsection{Asymptotically Constant $N$-Dimensional Systems, $N>1$}
\label{Section-n-dim}

It is not yet known if the results showing the equivalence of a nonautonomous bifurcation and end-point tracking for scalar equations generalize to $N$-dimensions, $N>1$.  However, below we show that the same conditions that guarantee end-point tracking in higher dimensional systems when $\lambda(rt)$ is asymptotically constant also guarantee that the pullback attractor is forward attracting.

Kiers and Jones give a condition to guarantee that the pullback attractor of a system in $N$-dimensions ($N>1$) with asymptotically constant parameter change end-point tracks the stable QSE.\cite{Kiers2019}  Here we show that the assumptions establishing this result are strong enough to also guarantee that the pullback attractor is a forward attractor for all positive values of the rate parameter.  This shows that the same conditions that guarantee there is no rate-induced tipping are the same conditions that guarantee the pullback attractor is also a forward attractor.  In order to prove this result, we need to recall two definitions from earlier studies.

\begin{definition}[Stable path\cite{Ashwin2017}]
Given an asymptotically constant parameter shift $\Lambda(s)$ satisfying $\lambda_-<\lambda(s)<\lambda_+$, $\lim_{s\to\pm\infty}=\lambda_\pm$, and $\lim_{s\to\pm\infty}d\lambda(s)/ds=0$, if for all fixed $s\in\mathbb R$, $X(s)$ is an attracting equilibrium of the autonomous system
\[\dot x=f(x,\lambda(s)),\]
then we say that $(X(s),\lambda(s))$ is a \emph{stable path} of the augmented autonomous system
\begin{equation}
\begin{aligned}
\dot x&=f(x,\lambda(s)),\\
\dot s&=r.
\label{EQ-augmented}
\end{aligned}
\end{equation}
Furthermore we define
\[X_\pm=\lim_{s\to\pm\infty}X(s).\]
\end{definition}

\begin{definition}[Forward inflowing stable\cite{Kiers2019}]
\label{def-fis}
The stable path $(X(s),\lambda(s))$ from $X_-$ to $X_+$ is \emph{forward inflowing stable} (FIS) if for each $s\in\mathbb R$ there is a compact set $K(s)$ such that
\begin{enumerate}
\item $X(s)\in\text{Int}K(s)$ for all $s\in\mathbb R$;
\item if $s_1<s_2$ then $K(s_1)\subset K(s_2)$;
\item if $x\in\partial K(x)$ then there exists $t_0>0$ so that $x(t:x,s)\in\text{Int}K(s)$ for all $t\in (s,s+t_0)$
\item $X_\pm\in \text{Int}K_\pm$ where $K_-=\bigcap_{s\in\mathbb R}K(s)$ and $K_+=\overline{\bigcup_{s\in\mathbb R}K(s)}$; and
\item $K_+\subset\mathbb B(X_+,\lambda_+)$ is compact.
\end{enumerate}
\end{definition}

\begin{prop}
If the stable path $(X(s),\lambda(s))$ from $X_-$ to $X_+$ is forward inflowing stable, then the pullback attractor to $X_-$ is end-point tracking and forward attracting for all $r>0$.
\end{prop}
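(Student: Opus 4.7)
The plan has two parts, corresponding to the two conclusions. For end-point tracking I would simply invoke the theorem of Kiers and Jones\cite{Kiers2019}, which asserts that the FIS hypothesis already forces the pullback attractor to $X_-$ to end-point track the stable path, giving $\gamma(s)\to X_+$ as $s\to\infty$. The new content is forward attraction, and the key idea is that the nested, inflowing compact family $\{K(s)\}$ traps any trajectory started near $\gamma$ inside the compact set $K_+$, after which asymptotic autonomy carries the trajectory to $X_+$.

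Concretely, I would fix $t_0 \in \mathbb{R}$ and choose a ball $\mathbb{B}(\gamma(t_0),\delta)\subset \text{Int}\,K(t_0)$. Property 3 of Definition \ref{def-fis} (inflowing boundary) together with the nested structure in property 2 forbids any trajectory $x(\cdot;x_0,t_0)$ starting in $\text{Int}\,K(t_0)$ from exiting $K(s)$ at any later $s\geq t_0$: if $s_1$ were the first exit time then $x(s_1;x_0,t_0)\in\partial K(s_1)$, and property 3 would push the trajectory back into $\text{Int}\,K(s)$ for $s$ slightly above $s_1$, contradicting exit. Hence $x(s;x_0,t_0)\in K(s)\subset K_+$ for all $s\geq t_0$.

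Because $K_+$ is a compact subset of the basin $\mathbb{B}(X_+,\lambda_+)$ of the limiting autonomous system $\dot x=f(x,\lambda_+)$ (property 5), and $\lambda(rs)\to \lambda_+$ as $s\to\infty$, a standard asymptotically autonomous argument (Markus-type) yields that the $\omega$-limit set of the trapped forward trajectory is nonempty, compact, invariant under the autonomous limit flow, and contained in $K_+$; since $X_+$ is the unique attractor whose basin contains $K_+$, this $\omega$-limit set must be $\{X_+\}$, so $x(s;x_0,t_0)\to X_+$. Combining with end-point tracking $\gamma(s)\to X_+$, the triangle inequality gives $|x(s;x_0,t_0)-\gamma(s)|\to 0$, which is forward attraction.

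The main obstacle is ensuring that $\gamma(t_0)\in \text{Int}\,K(t_0)$ for every $t_0$, since otherwise no ball centered at $\gamma(t_0)$ fits inside $\text{Int}\,K(t_0)$ and the trapping argument breaks. This is immediate for $|t_0|$ large by conditions 1 and 4 together with $\gamma(s)\to X_\pm$, but at intermediate times it requires ruling out $\gamma(s)\in\partial K(s)$, most cleanly by using the inflowing property in reverse (if $\gamma$ reached the boundary, backward continuation via property 3 would force it to have exited $K$, contradicting $\gamma$ being confined to the tube $\{K(s)\}_s$). A secondary technicality is justifying the asymptotically autonomous convergence cleanly in the nonautonomous setting, where compactness of $K_+$ strictly inside the autonomous basin (property 5) is essential: without it, slow parameter drift could in principle carry trajectories out of the basin before they settle.
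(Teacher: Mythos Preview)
Your approach is essentially the same as the paper's: both invoke Kiers--Jones for end-point tracking, trap trajectories inside the nested family $\{K(s)\}$ (the paper phrases this as forward invariance of $K=\bigcup_s K(s)\times\{s\}$ in the augmented system, but that is exactly your inflowing/no-first-exit argument), use the asymptotically autonomous structure to send every trapped trajectory to $X_+$ (the paper cites this as Lemma~5 of Kiers--Jones rather than sketching the Markus-type argument), and finish with the same triangle inequality against $\gamma(t)\to X_+$. The obstacle you flag---that $\gamma(t_0)\in\mathrm{Int}\,K(t_0)$---is handled in the paper by citing the proof of Proposition~1 in Kiers--Jones, which your backward-inflowing reasoning effectively reproduces.
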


\begin{proof}
The result regarding end-point tracking is proved by Kiers and Jones\cite{Kiers2019} (see Proposition 1 in that study). The following proof showing that the pullback attractor is forward attracting uses much of the machinery developed in that proof and other results in the same paper.

Fix $r>0$ and suppose the stable path $(X(s),\lambda(s))$ from $X_-$ to $X_+$ is forward inflowing stable.  Then there exists sets $K(s)$ satisfying Definition \ref{def-fis}. Let $K=\bigcup_{s\in\mathbb R}K(s)\times\{s\}$ and note that $K$ is forward invariant under the flow of the autonomous augmented system \eqref{EQ-augmented} as shown by Kiers and Jones.\cite{Kiers2019}

Under these assumptions there exists a pullback attractor $\gamma_{r}(t)$ limiting to $X_-$ as $t\to-\infty$ (see Theorem 2.2\cite{Ashwin2017}). By the same arguments in the proof of Proposition 1,\cite{Kiers2019} $\gamma_{r}(t)$ is in $K(rt)$ and $K_+$ for all $t\in\mathbb R$ and converges to $X_+$ as $t\to\infty$.

Given $x_0\in \partial K(rt_0)$, let $x(t;x_0,rt_0)$ be the solution to \eqref{EQ-system} through $x_0$ at time $rt_0$.  By the third property of forward inflowing stable, we know that there is a $s_0>0$ so that $x(t;x_0,rt_0)\in\text{Int} K(rt_0)$ for all $t\in(rt_0,rt_0+s_0)$. Because $K$ is forward invariant, this implies that $x(t;x_0,rt_0)\in\text{Int} K(t)$ and, thus, $K_+$ for all $t>rt_0$.  By Lemma 5\cite{Kiers2019} this implies that $x(t;x_0,rt_0)$ converges to $X_+$ as $t\to\infty$.

Then we see that 
\begin{align*}
\lim_{t\to\infty}&\|x(t;x_0,rt_0)-\gamma_{r}(t)\|\\
&\quad\leq \lim_{t\to\infty}\left(\|x(t;x_0,rt_0)-X_+\|+\|X_+-\gamma_{r}(t)\|\right)=0.
\end{align*}
Since $x_0$ and $t_0$ were arbitrary, we can conclude that $\gamma_{r}(t)$ is forward attracting.
\end{proof}

Note that this result does not rule out that there could be an end-point tracking pullback attractor which is not forward attracting nor does it rule out that there could be a forward attracting pullback attractor which is not end-point tracking. More work is needed to determine if the ideas of not end-point tracking and loss of forward stability are equivalent in $N-$dimensional systems with asymptotically constant parameter change.


\begin{figure}
\begin{center}
\begin{tikzpicture}
  \node (img1)  {\includegraphics[width=0.4\textwidth, trim=0.4cm 0.85cm 0.75cm 0.7cm, clip]{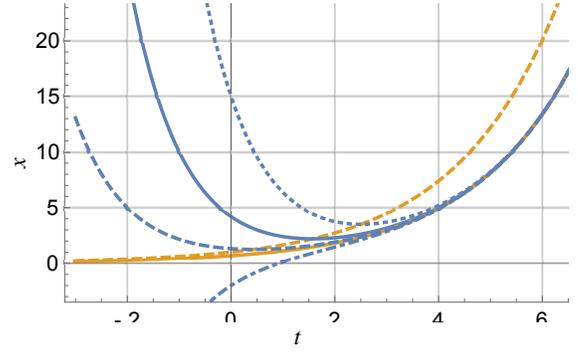}};
  \node[below=of img1, node distance=0cm, yshift=1.1cm] {$t$};
  \node[left=of img1, node distance=0cm, rotate=90, anchor=center, yshift=-1cm] {$x$};
\end{tikzpicture}
      \caption{Time series for $\dot{x}=-(x-e^{rt})$ with $r=1/2$ showing the system's forward attracting pullback attractor (solid yellow), stable QSE (dashed yellow), and selected solutions (blue curves; dotted, solid, dashed, dot-dashed). For $r>0$ and any $\epsilon>0$, the pullback attractor leaves the $\epsilon$-ball around the QSE forward in time.  }
    \label{fig:ex_drift}
    \end{center}
    \end{figure}

\section{Examples when $f(x,\lambda(rt),t)$ is Unbounded for $t\in \mathbb R$}
\label{Section-unbounded}

The theoretical framework for rate tipping is underdeveloped for locally bounded parameter change, where $\lambda(rt)$ is bounded for any finite time interval but may be unbounded on $\mathbb R$.  Below we detail some examples where solutions of systems do not end-point track the stable QSE of the system. In these examples we will see that not tracking the stable QSE is possible whether or not there is a critical transition caused by the rate parameter.  

In the first subsection, we given an example of a system where for any $\epsilon$, the pullback attractor always leaves any $\epsilon$-ball of the stable QSE forward in time but there is no rate-induced critical transition. In the second and third section we give systems where the local pullback attractors do not limit to the stable QSE and there is a rate-dependent critical transition. However, care must be taken because the former does not imply the latter and there are values of the rate parameter for which the pullback attractors do not end-point track the stable QSE but also do not have a critical transition. 

These examples show that rate-dependent transitions in systems with locally bounded parameter change can be independent of the limiting behavior of the pullback attractor in relation to the QSEs.  This independence of rate-dependent critical transitions from end-point tracking is the main reason we are advocating for removing the idea of the QSE from the definition of rate-induced tipping.

\subsection{Scalar Equations }

\subsubsection{Drift away from a stable QSE without a critical transition}
\label{Section-drift}

\noindent Consider system \eqref{EQ-system} with $N=1$ and
\begin{align}
    f(x,\lambda(rt))=-(x-\lambda(rt))
\label{EQ-drift}
\end{align}
with $\lambda(rt)=e^{rt}$ and $r>0$.  Using integrating factors, we see that the family of solutions is
\begin{align*}
x(t;t_0,x_0)=\frac{e^{rt}}{1+r}+C(x_0,t_0)e^{-t}.
\end{align*}
where $C(x_0,t_0)=\left(x_0-\frac{e^{rt_0}}{1+r}\right)e^{t_0}$ depends on the initial condition. Trajectories, the pullback attractor and the QSE for this system are plotted in Figure \ref{fig:ex_drift} for three values of the rate parameter.

For every choice of $r\in\mathbb R$ the solution
\[\gamma_r(t)=\frac{e^{rt}}{1+r}\]
is a global forward attracting pullback attractor. The forward attraction is clear to see from the form of the solutions.  To see that $\gamma_r(t)$ is a pullback attractor for the system, notice that
\begin{align*}
\lim_{s\to-\infty}&|x(t;x_0,s)-\gamma_r(t)|\\
&=\lim_{s\to-\infty}\left|\left(x_0-\frac{e^{rs}}{1+r}\right)e^{s-t}\right|=0.
\end{align*}
In fact, $\gamma_r(t)$ is the unique pullback attractor of the system and is the solid yellow line in Figure~\ref{fig:ex_drift}.

Ashwin et al have shown that scalar, asymptotically constant systems that have pullback attractors that are forward basin stable must end-point track their corresponding stable QSEs.\cite{Ashwin2017}  We see that
\begin{definition}[Forward Basin Stable\cite{Ashwin2017}] A stable path $(X(s),\lambda(s))$ is \emph{forward basin stable} if
\[\overline{\{X(u):u<s\}}\subset \mathbb B(X(s),\lambda(s))\qquad\text{for all }s\in\mathbb R\]
where $\mathbb B(X(s),\lambda(s))$ is the basin of attraction of the stable equilibrium $X(s)$.
\end{definition}
\noindent Note that the system contains a global forward attracting pullback attractor and the stationary system (corresponding autonomous system created by fixing $t$) has a global stable equilibrium for each $t$. In systems where $\lambda$ is asymptotically constant, a global forward attracting pullback attractor is forward basin stable. Thus any generalization of the definition of forward basin stability to systems with locally bounded parameter changes should include the system \eqref{EQ-drift}. 

However, the pullback attractor $\gamma_r(t)$ does not endpoint-track the QSE in this example.  Indeed, this equation has one quasi-static equilibrium given by $Q(t)=\lambda(rt)=e^{rt}$ which is stable. 
For the distance between the pullback attractor and the QSE is given by
\[|\gamma_r(t)-Q(t)|=\left|\frac{re^{rt}}{1+r}\right|\]
which grows without bound as time advances provided $r>0$ as assumed. For any $\epsilon>0$ the pullback attractor eventually leave the $\epsilon$-radius of the stable QSE. Thus, the pullback attractor does not end-point track the QSE and we see that forward basin stable does not imply end-point tracking in systems with non-asymptotically constant parameter changes.

\textbf{Remark.}  Because the pullback attractor does not end-point track the stable QSE forward in time when $r>0$, the current definition of rate-induced tipping says that the system tips for all $r>0$. Even worse, all solutions would be said to tip even under the weaker definition of \emph{tracking}\cite{Ashwin2012} where a solution need only be in some $\epsilon$-ball of the stable QSE in the limit as $t\to\infty$. However, for all $r>0$, there is a unique, global forward attracting pullback attractor and all solutions are defined for all time.  There are no critical transitions as a result of varying the rate parameter $r$ in this system and so it would be unfortunate to say there is rate-induced tipping in this scalar equation.

\begin{figure*}
\begin{center}
\begin{tikzpicture}
  \node (img1)  {\includegraphics[width=0.3\textwidth, trim=1.8cm .8cm 6.2cm .8cm, clip]{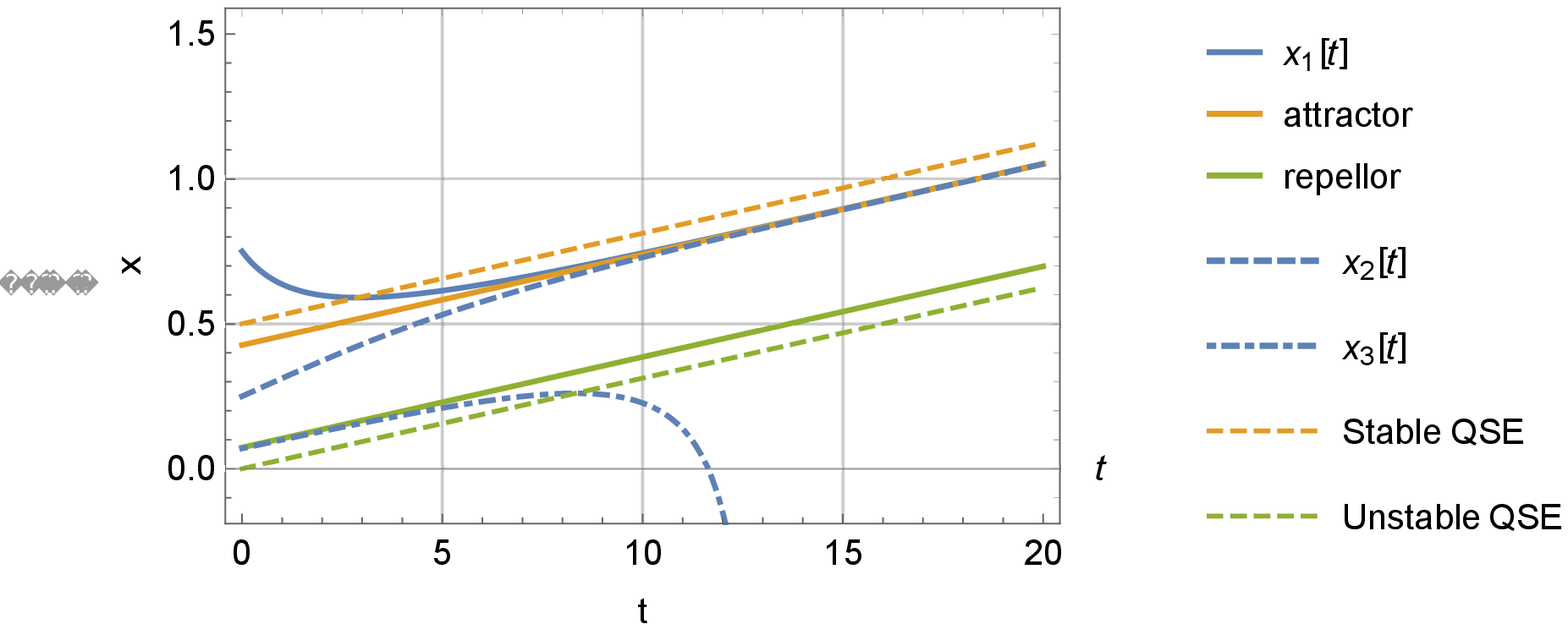}};
  \node[above=of img1, node distance=0cm, xshift=-2cm, yshift=-1.7cm] {\bf A};
  \node[below=of img1, node distance=0cm, yshift=1.1cm] {$t$};
  \node[left=of img1, node distance=0cm, rotate=90, anchor=center, yshift=-1cm] {$x$};
  \node[right=of img1,yshift=0.05cm, xshift=-.5cm] (img2)  {\includegraphics[width=0.31\textwidth, trim=0.4cm 0.85cm 1.0cm 0.7cm, clip]{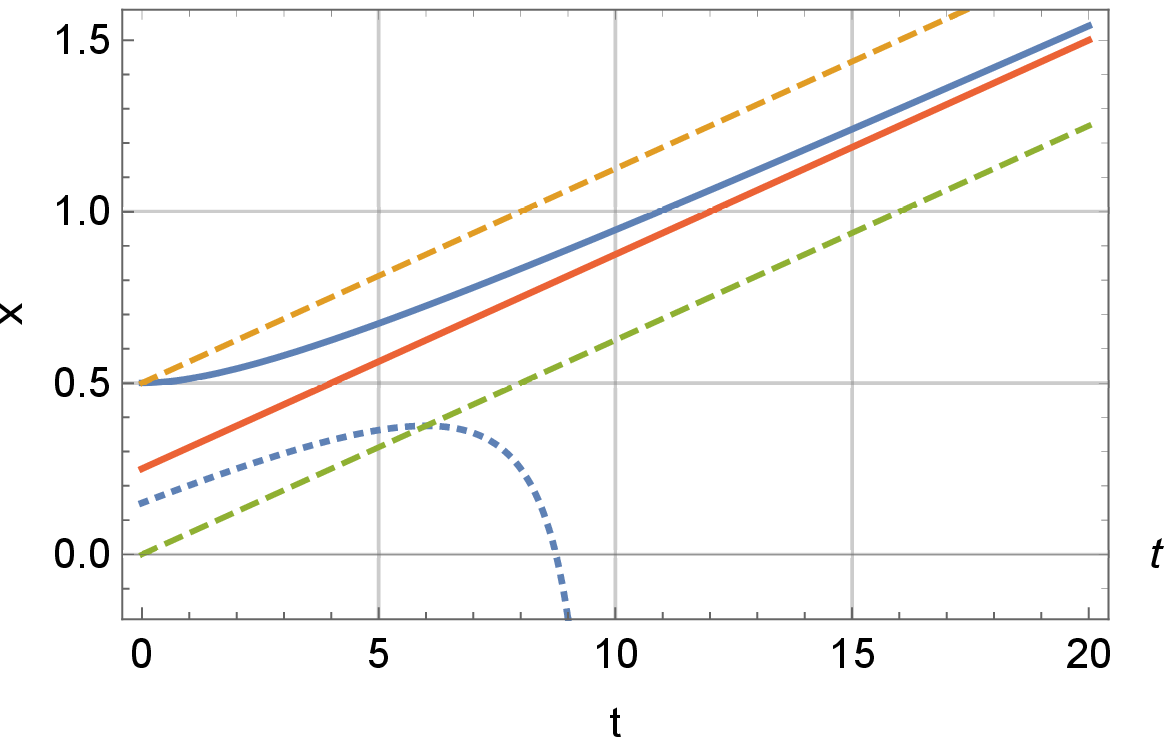}};
  \node[above=of img2, node distance=0cm, xshift=-2cm, yshift=-1.7cm] {\bf B};
  \node[below=of img2, node distance=0cm, yshift=1.1cm] {$t$};
  \node[left=of img2, node distance=0cm, rotate=90, anchor=center, yshift=-1cm] {$x$};
   \node[right=of img2,yshift=0cm,  xshift=-.5cm] (img3)  {\includegraphics[width=0.3\textwidth,  trim=1.8cm .8cm 5.8cm .8cm, clip]{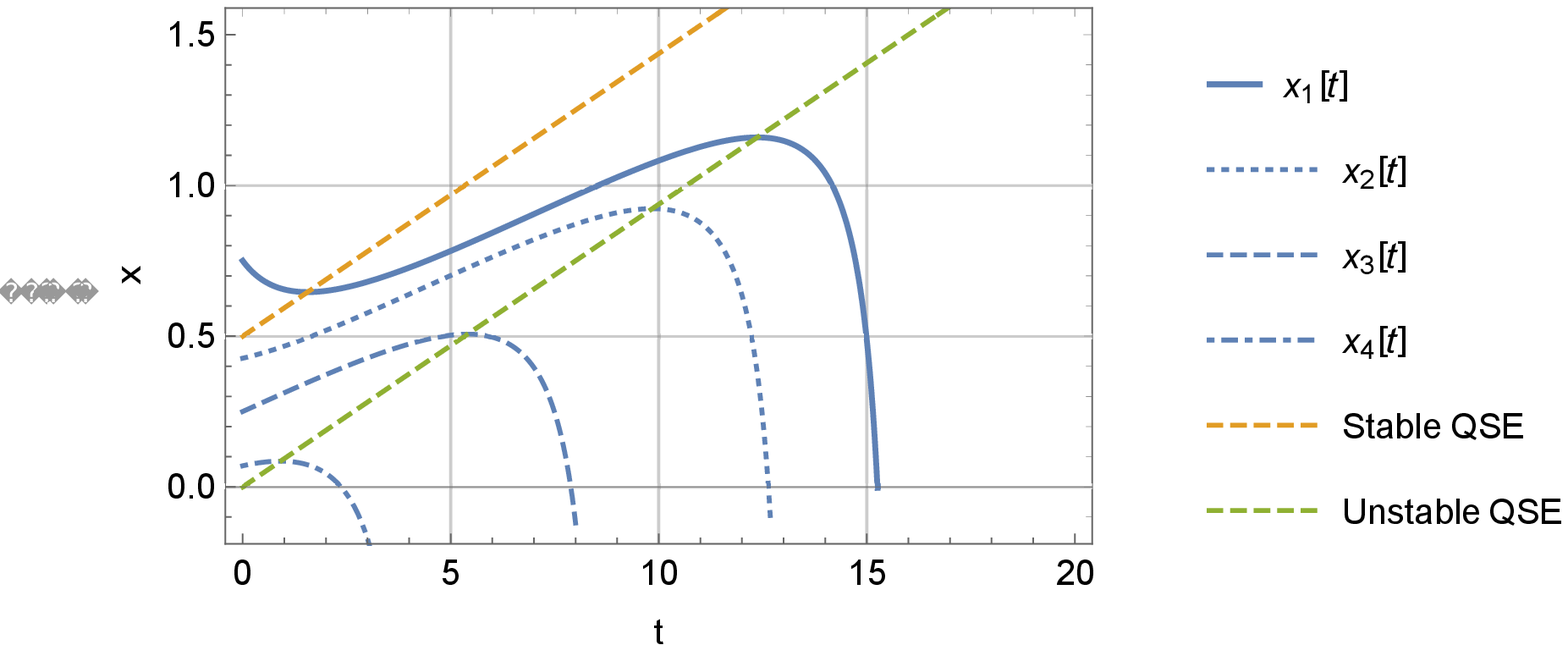}};
  \node[above=of img3, node distance=0cm, xshift=-2cm, yshift=-1.7cm] {\bf C};
  \node[below=of img3, node distance=0cm, yshift=1.1cm] {$t$};
  \node[left=of img3, node distance=0cm, rotate=90, anchor=center, yshift=-1cm] {$x$};
\end{tikzpicture}
       \caption{Time series for $\dot{x}=-(x-rt)(x-rt-1/2)$ showing the system's forward attracting pullback attractor (solid yellow), stable QSE (dashed yellow), forward repelling pullback repeller (solid green), unstable QSE (dashed green), and selected solutions (blue curves; dotted, solid, dashed, dot-dashed). \textbf{A:} $r=1/32$, \textbf{B:} $r=r^*=1/16$, \textbf{C:} $r=3/32$.  As $r$ increases to the critical value $r^*=1/16$ in \textbf{B}, the forward attracting pullback attractor and forward repelling pullback repeller collide, and at the critical value $r^*$, they are replaced by a hyperbolic globally defined solution that is neither attracting nor repelling (red).  }
    \label{fig:ex_SN}
    \end{center}
    \end{figure*}

\subsubsection{Drift away from a Stable QSE with a critical transition}
\label{Section-SN}

\noindent Consider system \eqref{EQ-system} with $N=1$ and
\[f(x,\lambda(rt))=-(x-\lambda(rt))(x-\lambda(rt)-\mu)\]
with $\lambda(rt)=rt$, pictured for three values of $r$ in Figure \ref{fig:ex_SN} with $\mu=1/2$.  A similar form of this equation has been analyzed in the literature (see example 3a in Ashwin et al, (2012)\cite{Ashwin2012}).  Solutions to the equation are straightforward to find once the equation is converted into co-moving coordinates.  Namely, let $y(t)=x(t)-\lambda(rt)-\mu/2$, then the co-moving equation is the autonomous equation
\begin{align*}
\dot y=-(y-\mu/2)(y+\mu/2)-r=-y^2+\left(\frac{\mu^2}{4}-r\right)
\end{align*}
with family of solutions for $r\leq\mu^2/4$ given by
\begin{align*}
y(t)=\begin{cases}
\rho\text{tanh}\left(\rho(t+C_1)\right), & |y_0|<\rho\\
\rho\text{coth}\left(\rho(t+C_2)\right) ,& |y_0|>\rho
\end{cases}
\end{align*}
where
\begin{equation}
\begin{aligned}
\rho&=\sqrt{\frac{\mu^2}{4}-r},\\ 
C_1(y_0,t_0)&=\frac{1}{2\rho}\log\left(\frac{y_0+\rho}{\rho-y_0}\right)-t_0, \quad\text{and} \\
C_2(y_0,t_0)&=\frac{1}{2\rho}\log\left(\frac{y_0+\rho}{y_0-\rho}\right)-t_0. 
\end{aligned}
\label{constants}
\end{equation}
The constant solutions $y(t)=\pm\rho$ are equilibria of the equation. Notice that if the initial condition satisfies $|y_0|>\rho$, then there is a finite time singularity at $t=-C_2$.  If $y_0>\rho$ this singularity happens in backward time and if $y_0<-\rho$ this singularity happens in forward time.

Transforming back to the stationary coordinates, we see that when $r<\mu^2/4$ solutions take the form
\begin{align*}
x(t)=\begin{cases}
rt+(\mu/2)+\rho\text{tanh}\left(\rho(t+C_1)\right), & |x_0-rt_0-\mu/2|<\rho\\
rt+(\mu/2)+\rho\text{coth}\left(\rho(t+C_2)\right) ,& |x_0-rt_0-\mu/2|>\rho
\end{cases}.
\end{align*}
where $C_1=C_1(x_0-rt_0-\mu/2,t_0)$ and $C_2=C_2(x_0-rt_0-\mu/2,t_0)$ as given in \eqref{constants}. For this range of $r$ there is a forward attracting pullback attractor given by
\begin{align*}
\gamma_r(t)=rt +(\mu/2)+\rho,
\end{align*} 
corresponding to the stable equilibrium in the co-moving system.  For the same values of $r$ there is a forward repelling pullback repeller given by
\begin{align*}
\zeta_{r}(t)=rt +(\mu/2)-\rho
\end{align*} 
corresponding to the unstable equilibrium in the co-moving system. Notice that only solutions with initial conditions between the pullback attractor and pullback repeller ($|x_0-rt_0-\mu/2|<\rho$) are defined for all time. Solutions with initial conditions outside of this set have finite time singularities forward or backward in time.

The stable and unstable QSEs of the equation are given by 
\[Q_S(t)=rt+\mu,\quad Q_U(t)=rt,\]
respectively. The distance between the pullback attractor and the stable QSE is given by
\[|\gamma_r(t)-Q_S(t)|=\frac{\mu}{2}-\sqrt{\frac{\mu^2}{4}-r}\]
which is greater than $0$ for $r>0$ and moves away from the stable QSE as $r$ increases. In particular, for any $0<\epsilon<\mu/2-\rho$, all solutions will leave the $\epsilon$-ball around the stable QSE forward in time. The pullback attractor does not end-point track the QSE for any $r>0$.

However, the system has a  nonautonomous saddle node bifurcation where the pullback attractor collides with the pullback repeller when $r=r^*=\mu^2/4$. For this value of $r$, the pullback attractor $\gamma_r$ loses forward stability (see Figure~\ref{fig:ex_SN}b~and~\ref{fig:ex_SN}c), which can be seen in both the stationary coordinates and the co-moving coordinates.  In particular, we see that  for this $r$ the value of $\rho=\sqrt{\mu^2/4-r}$ is zero.  Then solutions with $x_0>rt_0+\mu/2$ converge to the curve $\gamma_r(t)=rt+\mu/2$ in forward time and solutions with $x_0<rt_0+\mu/2$ diverge from the curve $\gamma_r(t)=rt+\mu/2$ in forward time towards negative infinity and have a finite time singularity at $t=-C_2(x_0,t_0)$.

When $r>\mu^2/4$,  there are no equilibria, stable or otherwise, in the autonomous co-moving system nor any globally defined solutions in the nonautonomous system.  In the nonautonomous system,  the family of solutions is given by
\begin{align*}
x(t)=rt+(\mu/2)-\tilde\rho\tan(\tilde\rho(t+C_3))
\end{align*}
where 
\begin{align*}
\tilde\rho&=\sqrt{r-\mu^2/4} \quad \text{and}\\
 C_3(x_0,t_0)&=-\frac{1}{\tilde\rho}\arctan\left(\frac{x_0-rt_0-\mu/2}{\tilde\rho}\right)-t_0.
 \end{align*}
All solutions blow up in finite time so we see that the pullback attractor has been annihilated by the collision with the pullback repeller at $r=r^*=\mu^2/4$.

The system has a critical transition induced by the rate parameter at $r=r^*:=\mu^2/4$ whereby the number of globally defined solutions of the system changes. For $r<r^*$ there are an uncountable number of globally defined solutions: solutions with initial conditions between the pullback attractor $\gamma_r(t)$ and the pullback repeller $\zeta_r(t)$. Solutions which start above the attractor or below the repeller have finite time singularities either backward or forward in time, respectively. At $r=r^*$ there is exactly one globally defined solution: the curve $rt+\mu/2$. This solution is neither attracting nor repelling in either the forward or pullback sense.  It is, however, a hyberbolic solution (its variational equation admits an exponential dichotomy).  When $r>r^*$ there are no globally defined solutions because all solutions have finite time singularities in forward and backward time.  Crucially, this critical transition has nothing to do with whether the pullback attractor end-point tracks its corresponding stable QSE.

\begin{figure*}
\begin{center}
\begin{tikzpicture}
  \node (img1)  {\includegraphics[width=0.3\textwidth, trim=0.4cm 0.85cm 1.0cm 0.7cm, clip]{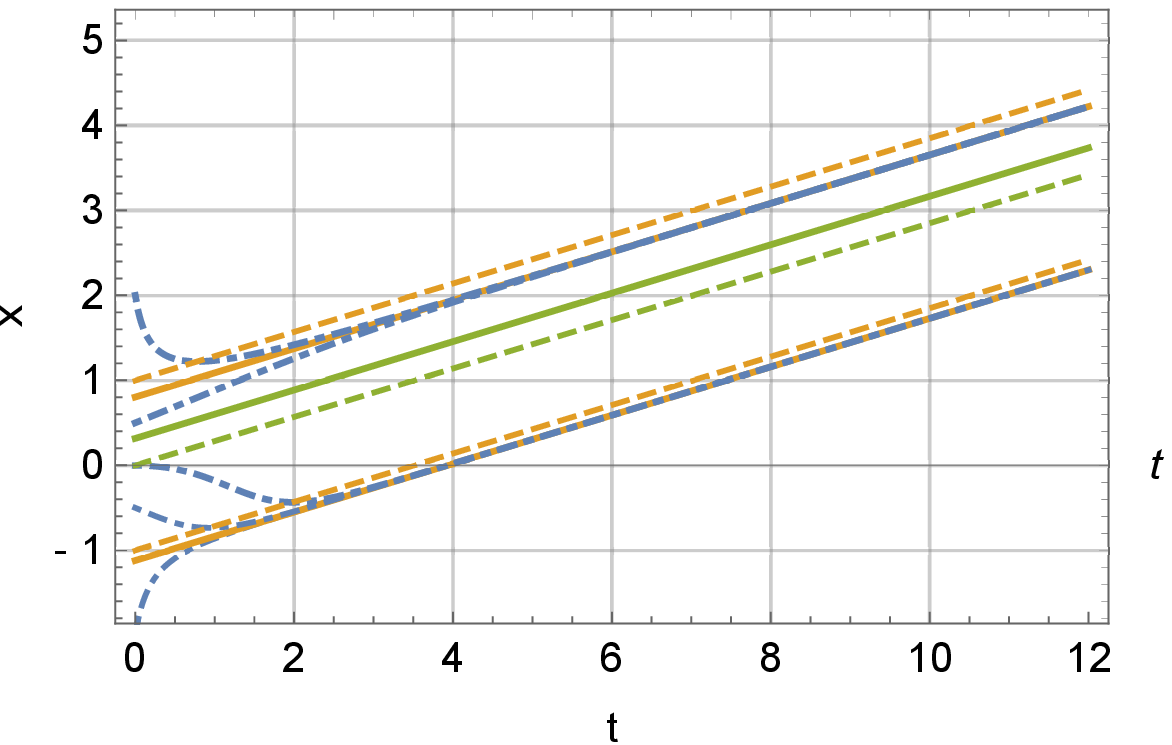}};
  \node[above=of img1, node distance=0cm, xshift=-2cm, yshift=-1.7cm] {\bf A};
  \node[below=of img1, node distance=0cm, yshift=1.1cm] {$t$};
  \node[left=of img1, node distance=0cm, rotate=90, anchor=center, yshift=-1cm] {$x$};
  \node[right=of img1,yshift=0.05cm, xshift=-.5cm] (img2)  {\includegraphics[width=0.31\textwidth, trim=0.4cm 0.85cm 1.0cm 0.7cm, clip]{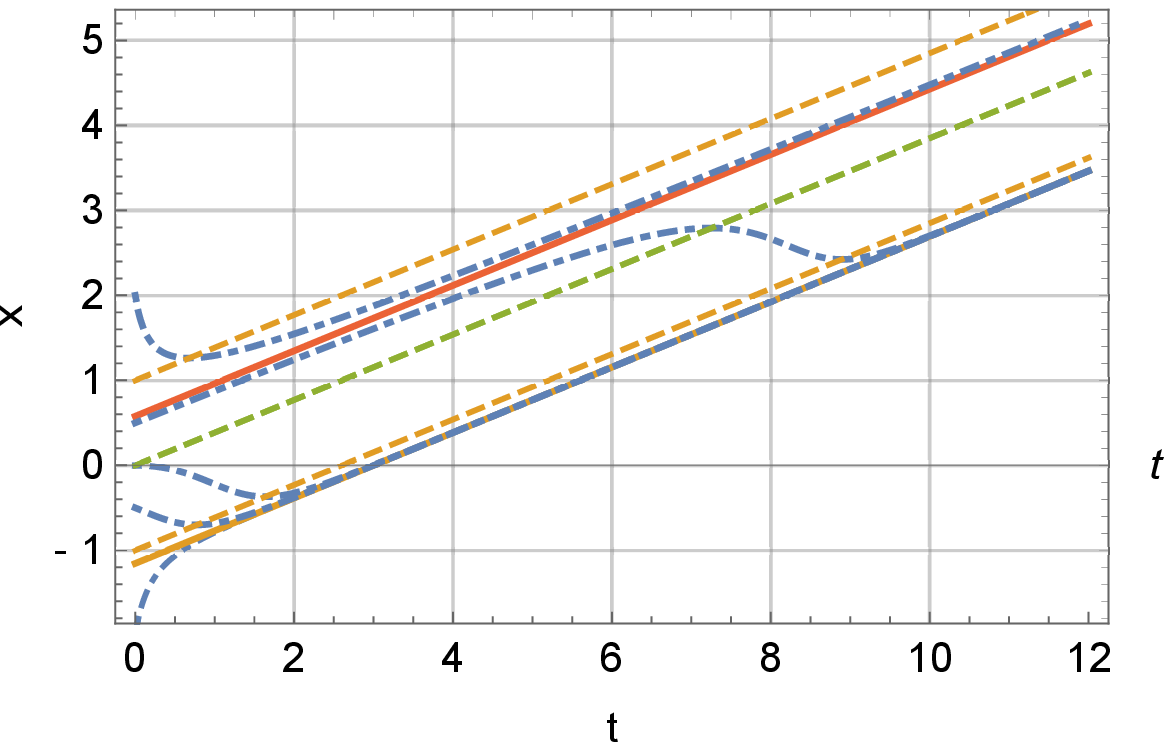}};
  \node[above=of img2, node distance=0cm, xshift=-2cm, yshift=-1.7cm] {\bf B};
  \node[below=of img2, node distance=0cm, yshift=1.1cm] {$t$};
  \node[left=of img2, node distance=0cm, rotate=90, anchor=center, yshift=-1cm] {$x$};
  \node[right=of img2,yshift=0cm,  xshift=-.5cm] (img3)  {\includegraphics[width=0.3\textwidth,  trim=0.4cm 0.85cm 1.0cm 0.7cm, clip]{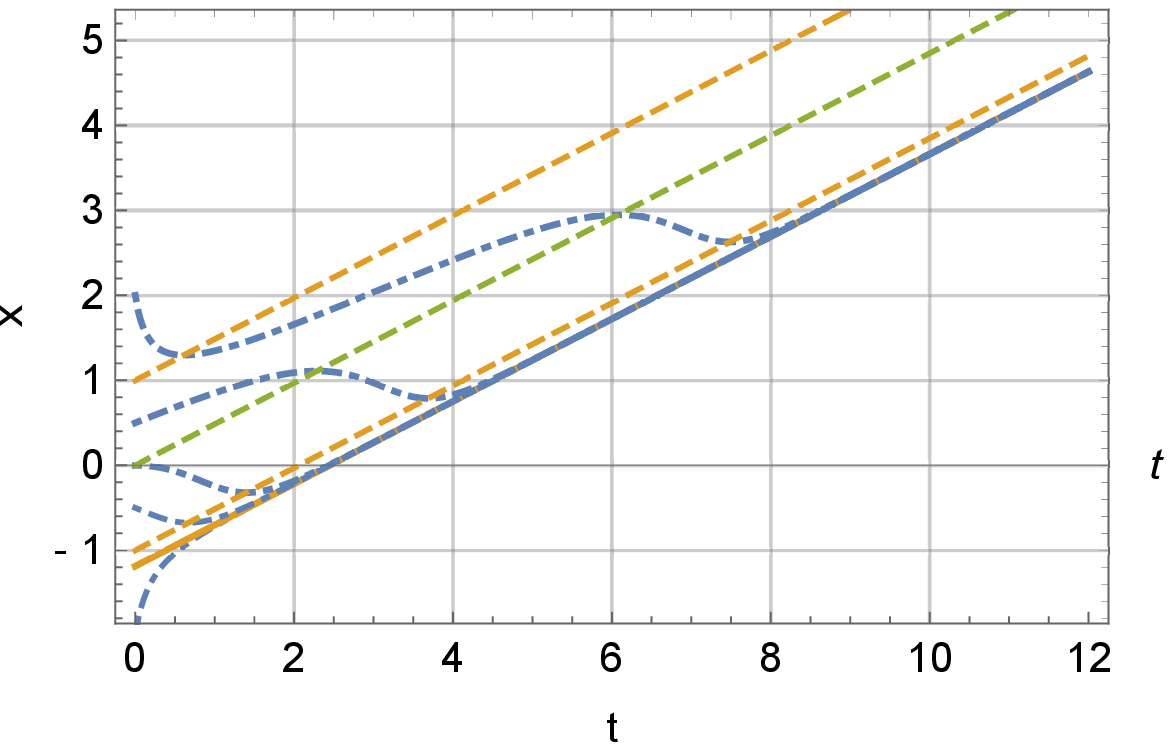}};
  \node[above=of img3, node distance=0cm, xshift=-2cm, yshift=-1.7cm] {\bf C};
  \node[below=of img3, node distance=0cm, yshift=1.1cm] {$t$};
  \node[left=of img3, node distance=0cm, rotate=90, anchor=center, yshift=-1cm] {$x$};
\end{tikzpicture}
       \caption{Time series for $\dot{x}=-(x-rt)(x-rt-1)(x-rt+1)$ showing the system's forward attracting pullback attractors (solid yellow), stable QSEs (dashed yellow), forward repelling pullback repeller (solid green), unstable QSE (dashed green), and selected solutions (blue dot-dashed). \textbf{A:} $r=-0.1+r_{+}^*$, \textbf{B:} $r=r_{+}^*=2/3\sqrt{3}$, \textbf{C:} $r=0.1+r_{+}^*2$.  As $r$ increases to the critical value $r_{+}^*$ in \textbf{B}, the top forward attracting pullback attractor and forward repelling pullback repeller collide, and are replaced by a hyperbolic globally defined solution that is neither attracting nor repelling (red).  }
    \label{fig:SN-local}
\end{center}
\end{figure*}

\subsubsection{Local Critical Transition Caused By Varying $r$}
\label{Section-SN-local}

The behavior of the bifurcation induced by varying $r$ can be local.  To see this, consider the system \eqref{EQ-system} with $N=1$ and
\begin{align}
f(x,\lambda(rt))=-(x-\lambda(rt))(x-\lambda(rt)-\mu)(x-\lambda(rt)+\mu),
\label{EQ-SN-local}
\end{align}
again with $\lambda(rt)=rt$.  Using the transformation $y(t)=x(t)-\lambda(rt)-\mu/2$, the same method as in Section \ref{Section-SN} yields a co-moving system with two stable equilibria and one unstable equilibria for 
\[|r|<\frac{2\mu^3}{3\sqrt{3}}.\]
At $r=r_{\pm}^*=\pm\frac{2\mu^3}{3\sqrt{3}}$ the system goes through a saddle node bifurcation, annihilating one of the stable equilibria and the unstable equilibrium and leaving only one stable equilibrium in the system.  In the nonauotnomous system, the equilibria correspond to forward attracting pullback attractors (stable equilibria) or forward repelling pullback repellers (unstable equilibrium).

Solutions to the nonautonomous scalar equation are plotted in Figure \ref{fig:SN-local} for $\mu=1$ and three values of $r$: $r_1=r_{+}^*-0.1$, $r_2=r_{+}^*$ and $r_3=r_{+}^*+0.1$.  As with the example from Section \ref{Section-SN}, the system goes through a nonautonomous saddle node bifurcation at the critical rate parameter value of $r=r_{+}^*$.  This bifurcation affects only the top attractor and the repeller.  The forward attraction of the bottom pullback attractor is not affected by passing through $r_{+}^*$.

\subsection{$N$-Dimensional Systems}
\label{Section-unbounded-n}

\begin{figure*}
\begin{center}
\begin{tikzpicture}
  \node (img1)  {\includegraphics[width=0.3\textwidth]{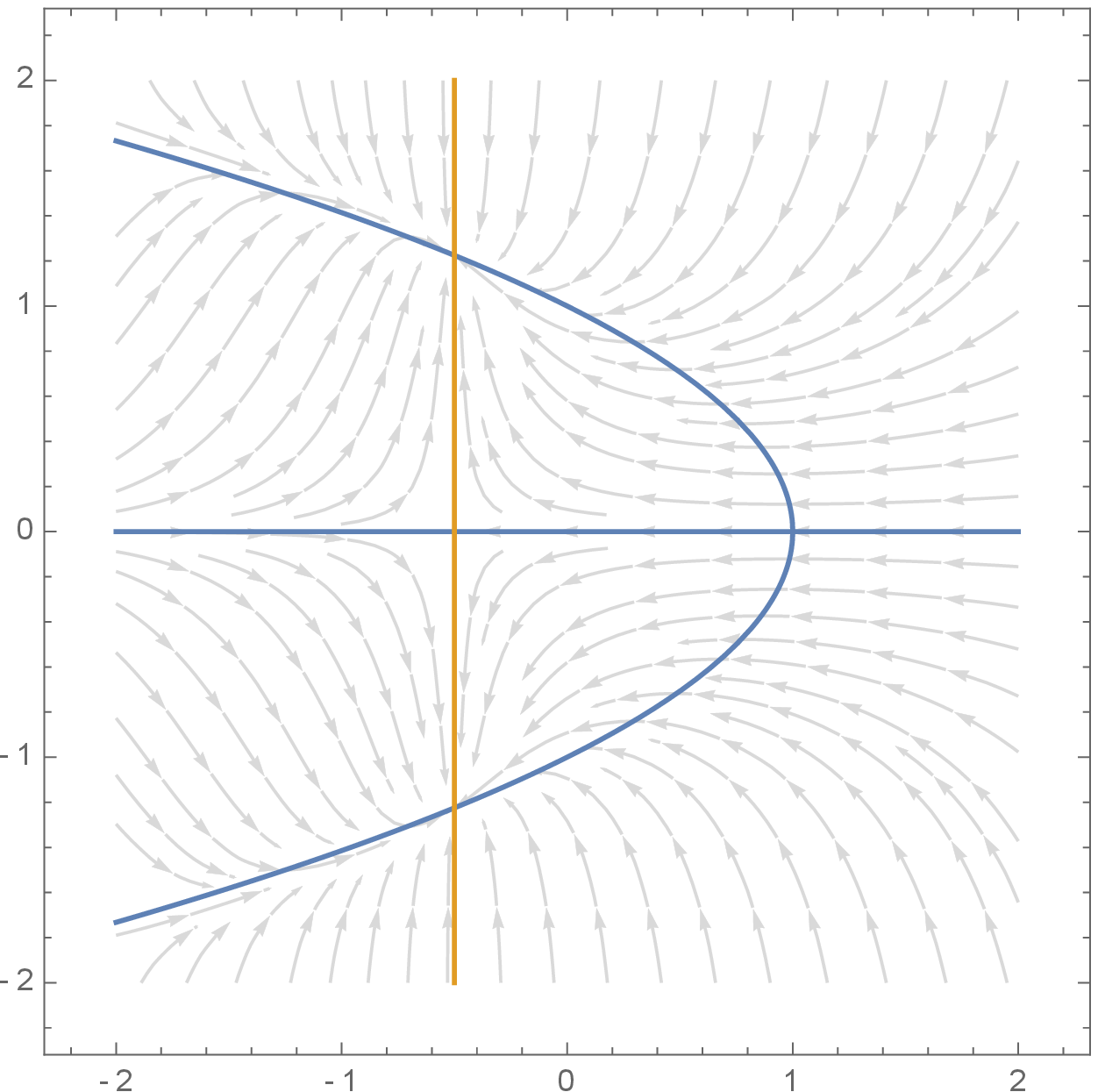}};
  \node[above=of img1, node distance=0cm, xshift=-2.1cm, yshift=-1.65cm] {\bf A};
  \node[below=of img1, node distance=0cm, yshift=1.1cm, xshift=.25cm] {$z$};
  \node[left=of img1, node distance=0cm, rotate=90, anchor=center, yshift=-1cm] {$y$};
  \node[right=of img1,yshift=0cm, xshift=-.5cm] (img2)  {\includegraphics[width=0.3\textwidth]{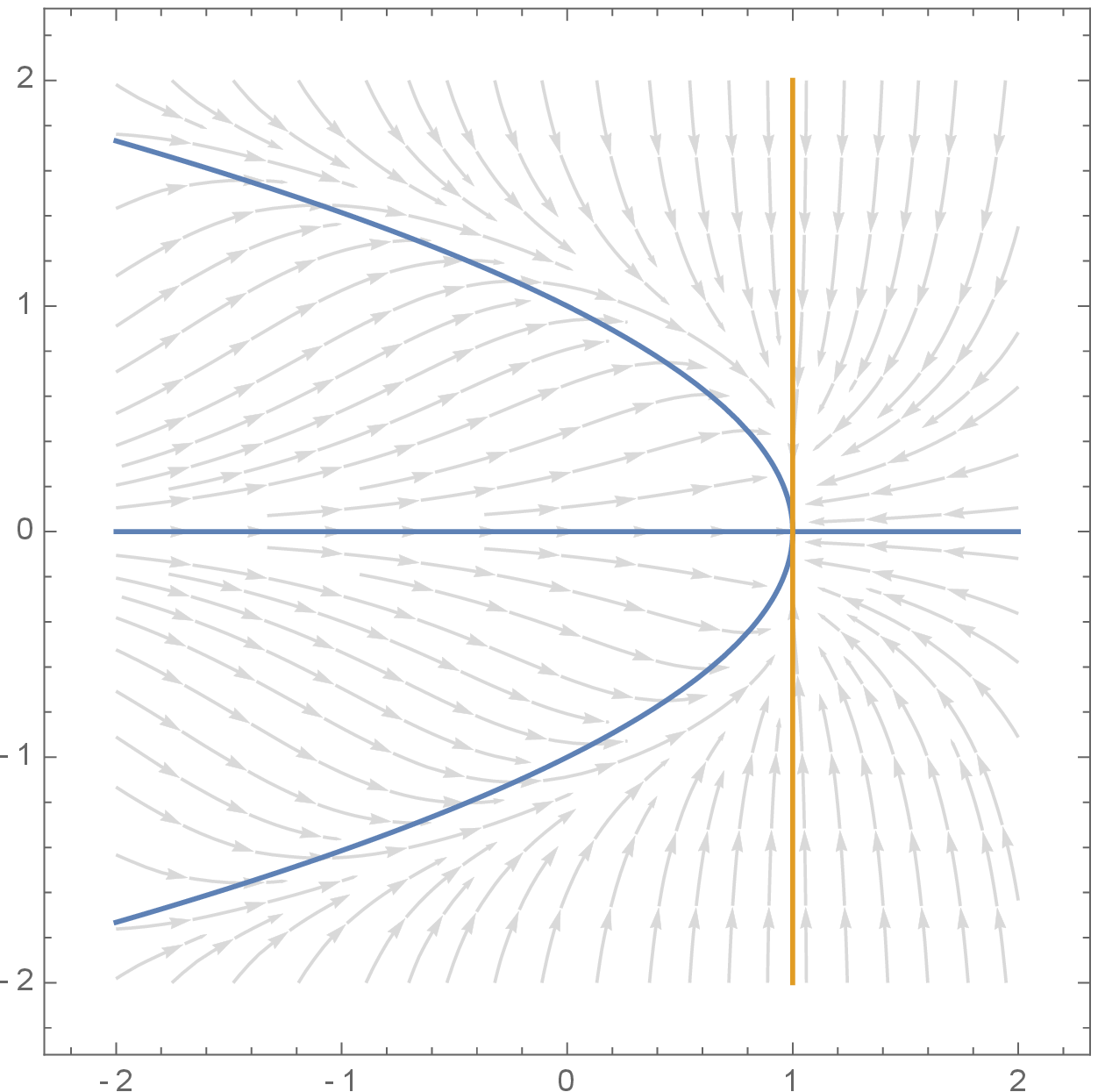}};
  \node[above=of img2, node distance=0cm, xshift=-2.1cm, yshift=-1.65cm] {\bf B};
  \node[below=of img2, node distance=0cm, yshift=1.1cm, xshift=.25cm] {$z$};
  \node[left=of img2, node distance=0cm, rotate=90, anchor=center, yshift=-1cm] {$y$};
  \node[right=of img2,yshift=0cm,  xshift=-.5cm] (img3)  {\includegraphics[width=0.3\textwidth]{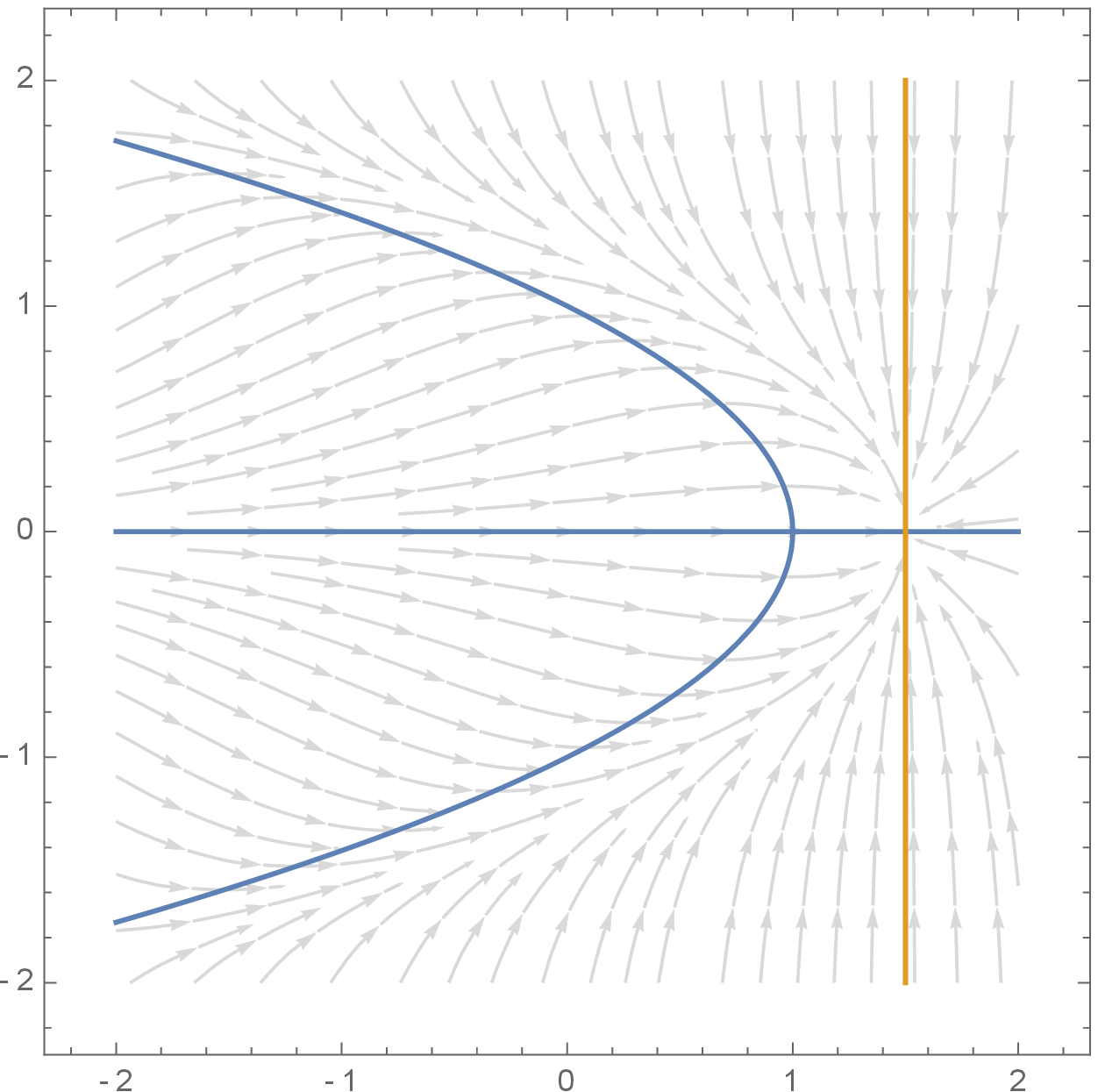}};
  \node[above=of img3, node distance=0cm,xshift=-2.1cm, yshift=-1.65cm] {\bf C};
  \node[below=of img3, node distance=0cm, yshift=1.1cm, xshift=.25cm] {$z$};
  \node[left=of img3, node distance=0cm, rotate=90, anchor=center, yshift=-1cm] {$y$};
\end{tikzpicture}
\caption{Trajectories (gray curves with arrow),  $y$-nullclines (blue solid curves), and $z$-nullclines (yellow solid curve) for the co-moving system \eqref{EQ-Ndim-auto} with $\mu=1$.  The system has a pitchfork bifurcation at $r=r^*=1$ where two sinks collide with a saddle. \textbf{A:} $r=-1/2$, \textbf{B:} $r=r^*=1$, \textbf{C:} $r=3/2$.}
    \label{fig:N-dim-auto}
    \end{center}
    \end{figure*}
    
The behavior of solutions of scalar systems not end-point tracking stable QSEs but also not tipping that we saw in the previous section are also present in $N$-dimensional systems.  This behavior is not dependent on the parameter change $\lambda$ being linear nor it is because the systems had nonautonomous saddle not bifurcations. In the next example we see that $\lambda(rt)$ may be a $p$-degree polynomial, provided the time dependence of the derivative of $\lambda$ is present in the vector field of the co-moving variable(s). Further, the system experiences rate-induced tipping due to passage through a nonautonomous pitchfork bifurcation.

Consider the two dimensional system 
\begin{equation}
\begin{aligned}
\dot x&= -\left(x + \sum_{k=1}^p{p\choose k}(rt)^k \right)-  rp\sum_{k=0}^{p-1}{p-1\choose k}(rt)^k(1-\delta(k)) \\ 
\dot y&= -y \left(x + \sum_{k=1}^p{p\choose k}(rt)^k - \mu + y^2\right)
\label{EQ-Ndim}
\end{aligned}
\end{equation}
 where $\delta(\cdot)$ is the Kronecker-delta function,
 \[\lambda(t)=\sum_{k=1}^p{p\choose k}(rt)^k\]
and $\mu>0$. As with the previous example, we may transform to the co-moving frame with the change of coordinates $z(t)=x(t)+\lambda(t)$. We have that 
 \begin{align*}\dot z&=\dot x+\dot\lambda\\
 &=\dot x+\sum_{k=1}^{p}{p\choose k}rk(rt)^{k-1}\\
  &=\dot x+\sum_{k=1}^{p}{p-1\choose k-1}rp(rt)^{k-1}.
 \end{align*}
Re-indexing the sum and plugging in $\dot x$ yields the autonomous system
\begin{equation}
\begin{aligned}
\dot z&=-z+r\\
\dot y&= -y (z - \mu + y^2).
\label{EQ-Ndim-auto}
\end{aligned}
\end{equation}
The nullclines and sample trajectories in the co-moving system are plotted for $\mu=1$ and three values of $r$ in Figure~\ref{fig:N-dim-auto}. For values of r satisfying $r<\mu$, the autonomous system has a stable equilibrium at $Z^*_{s,\pm}=(r,\pm\sqrt{\mu-r})$ and a saddle equilibrium at $Z^*_{u}=(r,0)$.  This autonomous system has a pitchfork bifurcation at $r=\mu$.

The equilibria of the co-moving system correspond to globally defined solutions for the nonautonomous system.  For $r<\mu$, the stable equilibria in the co-moving system correspond to forward attracting pullback attractors
\begin{align*}
\gamma_{r,\pm}(t)=\left[\begin{array}{c}
r-\sum_{k=1}^p{p\choose k}(rt)^k\\
\pm\sqrt{\mu - r}
\end{array}\right]
\end{align*}
in the nonautonomous system. The saddle equilibrium in the co-moving system is a hyperbolic, globally defined solution
\begin{align*}
\zeta_{r}(t)=\left[\begin{array}{c}
r-\sum_{k=1}^p{p\choose k}(rt)^k\\
0
\end{array}\right]
\end{align*}
which is neither  forward nor pullback attracting or repelling.  When $r=\mu$, all of these globally defined solutions coincide and correspond to a forward attracting pullback attractor (although the forward attraction is quite slow).  For $r>\mu$, only $\zeta_{r}(t)$, remains in the system, now as a forward attracting pullback attractor.  

As with the scalar equations, the pullback attractors do not end-point track the stable QSEs for any nonzero $r$ because the distance between the pullback attractors and the stable QSEs does not limit to zero.  Indeed, the stable QSEs are given by
\[Q_{s,\pm}(t)=\left[\begin{array}{c}
-\lambda(rt)-\dot\lambda(rt)+r  \\
\pm\sqrt{\dot\lambda(rt)-r+\mu}
\end{array}\right]\]
Then for $r<r^*$
\begin{align*}
||\gamma_{r,\pm}&(t)-Q_{s,\pm}(t)||\\
&=\left((\dot\lambda)^2+\dot\lambda+2(\mu-r)-2\sqrt{(\mu-r)(\mu-r+\dot\lambda)}\right)^{1/2}
\end{align*}
For $p=1$, the distance is given by the constant $\sqrt{r^2-r+2\mu-2\sqrt{\mu(\mu-r)}}.$ When $p>1$, the distance grows to infinity as $t\to\infty$.
Some solutions for the nonautonomous system are plotted in Figure~\ref{fig:ndim-nonauto} for $\mu=1$ and $p=1$ (Multimedia view).

\begin{figure*}
\begin{center}
  \begin{tikzpicture}
  \node (img1)  {\includegraphics[width=0.3\textwidth]{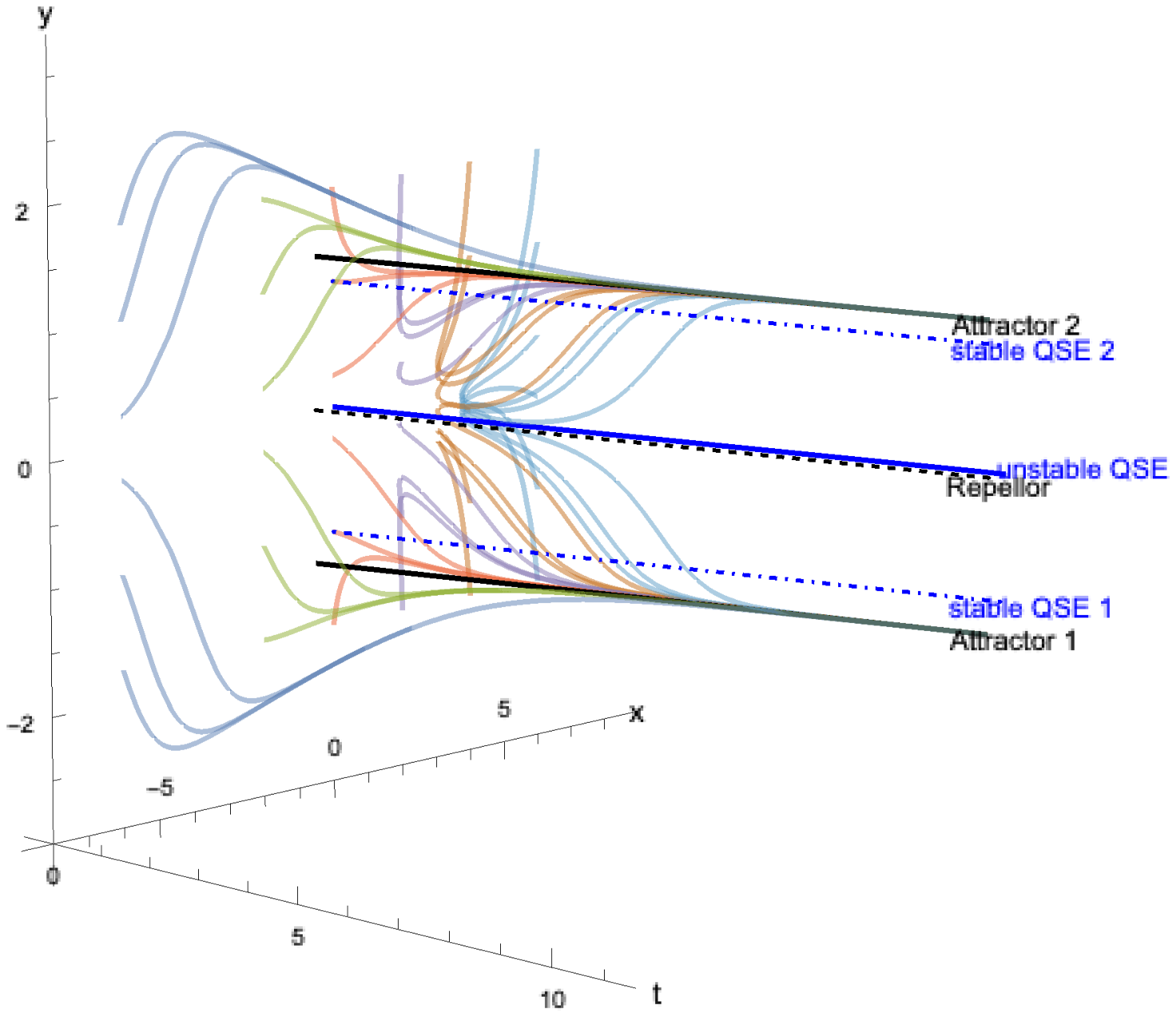}};
  \node[above=of img1, node distance=0cm, xshift=-2.1cm, yshift=-1.65cm] {\bf A};
  \node[right=of img1,yshift=0cm, xshift=-.5cm] (img2)  {\includegraphics[width=0.3\textwidth]{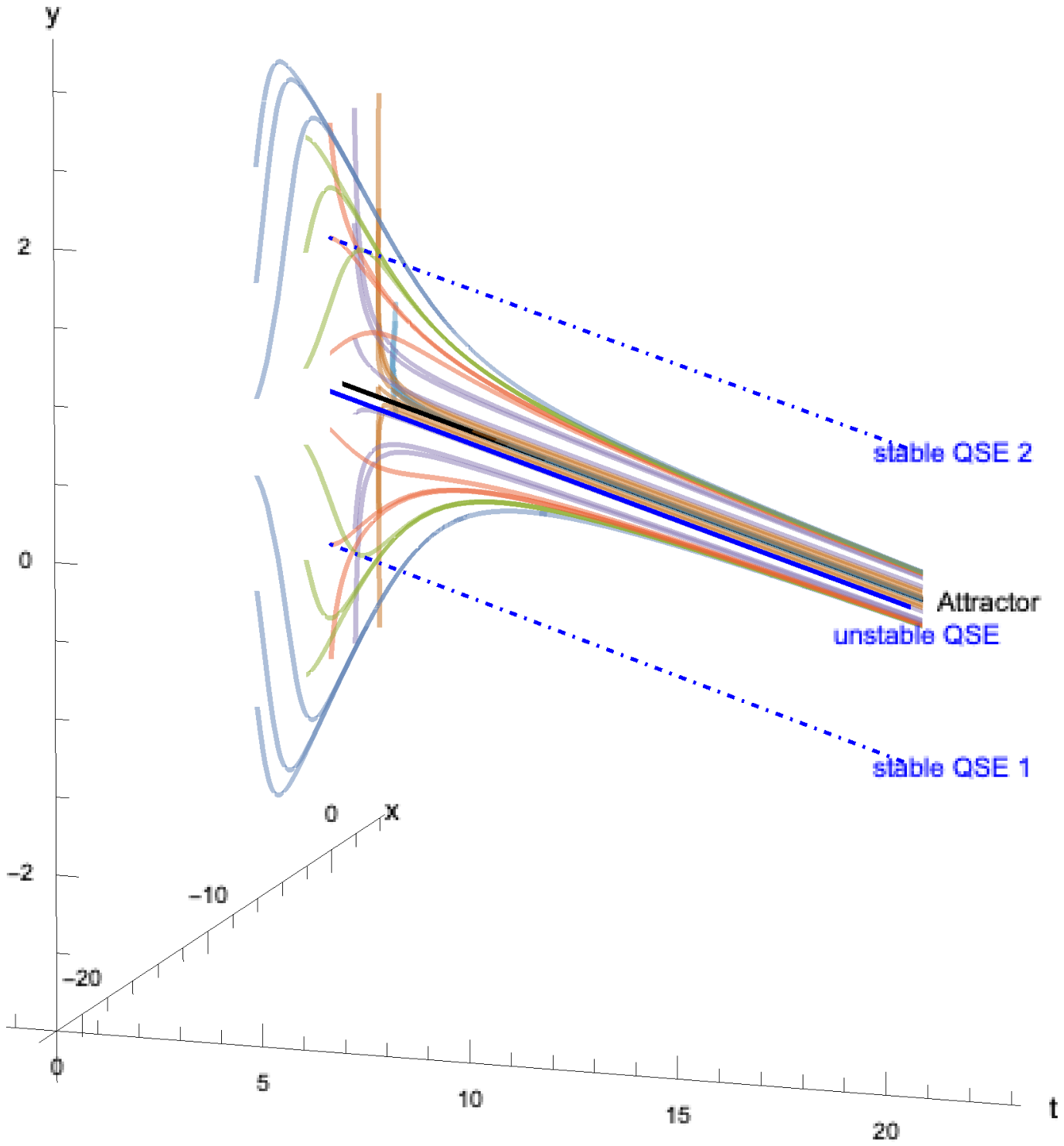}};
  \node[above=of img2, node distance=0cm, xshift=-1.9cm, yshift=-1.65cm] {\bf B};
  \node[right=of img2,yshift=0cm,  xshift=-.5cm] (img3)  {\includegraphics[width=0.3\textwidth]{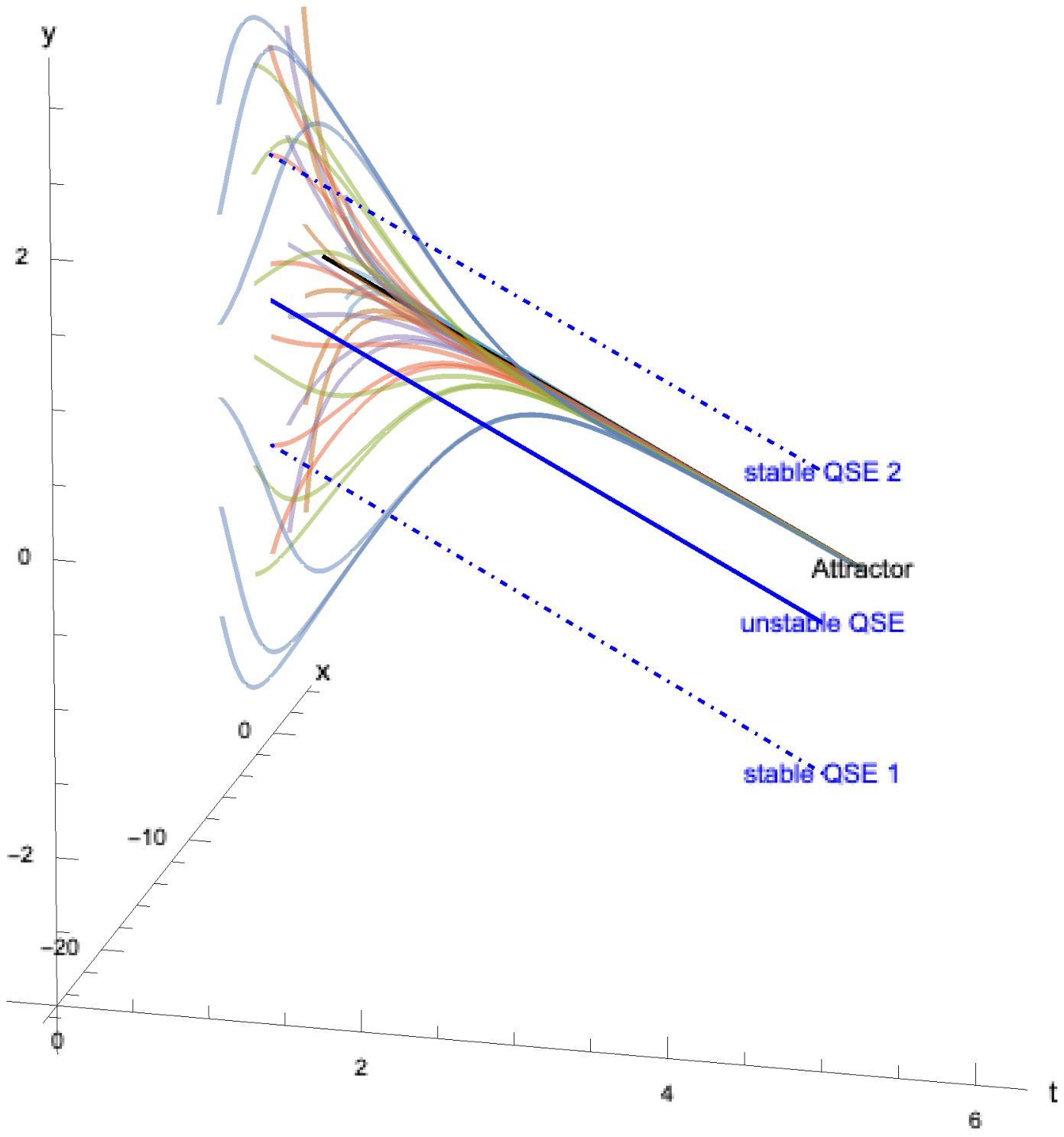}};
  \node[above=of img3, node distance=0cm,xshift=-1.9cm, yshift=-1.65cm] {\bf C};
  \end{tikzpicture}
\caption{Time Series for \eqref{EQ-Ndim} with $p=1$ and $\mu=1$ showing the system’s forward attracting pullback attractor (solid black), stable QSE(dot-dashed blue), forward repelling pullback repeller (dashed black), unstable QSE (solid blue), and selected solutions (various opaque colors). The system has a nonautonomous bifurcation at $r=r^*=1$ where a two forward attracting pullback attractors collides with the forward repelling pullback repleler in a pitchfork bifurcation. \textbf{A:} $r=-1/2$, \textbf{B:} $r=r^*=1$, \textbf{C:} $r=5$. (Multimedia view)}
    \label{fig:ndim-nonauto}
    \end{center}
    \end{figure*}

In Figure \ref{fig:ndim-nonauto} (Multimedia view) we plot time series various solutions to  \eqref{EQ-Ndim} for $p=1$ and $\mu=1$ and see a critical transition induced by varying the rate parameter $r$. We consider three values of $r$, one value below the tipping threshold ($r=-1/2$), the critical value of $r$ that causes tipping ($r^*=1$) and one value above the tipping threshold ($r=5$). We also plot forward attracting pullback attractors (solid black), forward repelling pullback repeller (dashed black), stable QSEs (dot-dashed blue), and unstable QSE (solid blue).  Tipping corresponds to a pitchfork bifurcation in the pullback attractors and pullback repeller at $r=r^*$.  For larger $r$ only one pullback attractor remains in the system.

\subsection{Tipping when a Transformation to a Co-moving System is Possible}
\label{Section-proof}

It is possible to generalize these ideas to $N$-dimensional systems with locally bounded parameter changes, provided they can be transformed through a co-moving coordinate change to an autonomous system.  
Here we consider translated systems like the ones from our examples, namely where the parameter changing in time, $\lambda(rt)$, only appears in the system with a difference to state variables.  More precisely, instead of considering the fully general case
\begin{align*}
\dot x=f(x,\lambda(rt),t),\quad x\in\mathbb R^N, \ \lambda(rt)\in\mathbb R^M
\end{align*}
we instead consider systems of the form
\begin{equation}
\begin{aligned}
\dot x=f(x-\Lambda(rt),t,\mu,r), & \quad x,\Lambda(rt)\in\mathbb R^N,  \ \mu\in\mathbb R^M, r\in\mathbb R,\\
\Lambda_i(rt)=a_i\lambda(rt), & \quad a_i,\lambda(rt)\in\mathbb R, i\in\{1,\ldots,N\}
\label{EQ-system-proof}
\end{aligned}
\end{equation}
where $\mu$ denotes the constant parameters of the system.  Although this choice is certainly restricting, similar restrictions are found in other rate-induced tipping studies and are called \emph{parameter shift systems}.\cite{Ashwin2017}  Of course we also take $f$ smooth enough to ensure existence and uniqueness of solutions. 

With these types of systems in mind, we find that if there exists a coordinate change taking the nonautonomous system \eqref{EQ-system-proof} to an autonomous one, then equilibria of the autonomous system correspond to hyperbolic, globally defined solutions in the nonautonomous system. While $\lambda(rt)=rt$ is an obvious candidate for the types of parameter change that will allow us to change to an autonomous co-moving system, this form of $\lambda$ will only result in an autonomous transformation when $f$ has no other time dependence except for that which is induced by the time dependent $\lambda$. As we saw with the example in Section \ref{Section-unbounded-n}---where $\lambda$ was a polynomial of degree $p$---more interesting $\lambda(rt)$ are possible provided the time-dependence of $f$ is cancelled out by the time dependent components of the transformation.  As such, we are not restricting to any particular form of $\lambda(rt)$. More specifically:


\begin{prop}
Suppose 
\begin{enumerate}[(i)]
\item $\lambda(rt)$ is locally bounded for all $r$
\item for $x=(x_1,x_2,\ldots,x_N)\in\mathbb R^N$ there is a set $\mathcal N\subseteq\{1,\ldots,N\}$ so that the time dependent vector $v(t)$ has components  $v_i(t)=a_i\lambda(rt)+b_i$ for some constants $a_i,  b_i\in\mathbb R$ and $i\in\mathcal N$ and $v_j(t)=0$ for $j\not\in\mathcal N$;
\item and the transformation $y=x-v(t)$ results in an autonomous system
\begin{align}
\dot y=g(y,\mu,r)=f(y,t,\mu,r)-\dot v(t),\quad y\in\mathbb R^N.
\label{EQ-comoving}
\end{align}
\end{enumerate}
Then any hyperbolic equilibria $y_{r}^*$ of the co-moving system \eqref{EQ-comoving} correspond to globally defined solutions of the original nonautonomous system \eqref{EQ-system-proof}. Furthermore,
\begin{enumerate}[(a)]
\item $y_{r}^*$ is a sink if and only if the corresponding globally defined solution of \eqref{EQ-system-proof} is a forward attracting pullback attractor; and
\item $y_{r}^*$ is a source if and only if the corresponding globally defined solution of \eqref{EQ-system-proof} is a forward repelling pullback repeller.
\end{enumerate}
\end{prop}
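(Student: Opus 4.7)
The plan is to transfer stability statements through the explicit conjugacy $y = x - v(t)$ between the nonautonomous system \eqref{EQ-system-proof} and the autonomous co-moving system \eqref{EQ-comoving}. Because $v$ depends only on $t$, this change of coordinates is a distance-preserving bijection on each time slice, so $\|x_1(t) - x_2(t)\| = \|y_1(t) - y_2(t)\|$ for any two solutions related by the transformation, and because $v(t)$ is defined on all of $\mathbb R$ by hypothesis (i), the correspondence holds over the entire real line.

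First I would verify that every hyperbolic equilibrium $y_r^*$ of \eqref{EQ-comoving} yields a globally defined solution $x^*(t) := y_r^* + v(t)$ of \eqref{EQ-system-proof}. The equilibrium identity $0 = g(y_r^*,\mu,r) = f(y_r^*,t,\mu,r) - \dot v(t)$ supplied by assumption (iii) gives $\dot x^*(t) = \dot v(t) = f(y_r^*,t,\mu,r) = f(x^*(t) - v(t), t, \mu, r)$, which is exactly the equation of motion; global definition then follows from hypothesis (i). Hyperbolicity transfers cleanly because autonomy of $g$ forces $D_x f(y,t,\mu,r) = D_y g(y,\mu,r)$ to be independent of $t$, so the variational equation along $x^*$ is the constant-coefficient system $\dot\xi = Dg(y_r^*)\xi$, whose hyperbolicity in the standard sense produces an exponential dichotomy on $\mathbb R$.

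For parts (a) and (b), the key observation is that a constant trajectory $y(t)\equiv y_r^*$ at a hyperbolic sink of an autonomous flow is, by time-translation invariance, both forward attracting and pullback attracting on the local basin of $y_r^*$. The identity $x(t;x_0,t_0) - x^*(t) = y(t; x_0 - v(t_0), t_0) - y_r^*$ then transfers each limit statement directly: forward attraction of $y_r^*$ yields $\lim_{t\to\infty}\|x(t;x_0,t_0) - x^*(t)\| = 0$ for $x_0$ near $x^*(t_0)$, and the pullback limit $t_0\to -\infty$ sends $t - t_0\to\infty$ in the autonomous flow, giving $\lim_{t_0\to -\infty}\|x(t;x_0,t_0) - x^*(t)\| = 0$ for $x_0$ near $x^*(t)$. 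The converses follow by contraposition: a hyperbolic equilibrium that is not a sink has an unstable direction in $Dg(y_r^*)$, whose existence produces nearby $y$-trajectories diverging from $y_r^*$ forward in time, and the distance-preserving conjugacy transfers this divergence to $x$-trajectories, contradicting forward attraction of $x^*$. Part (b) is handled identically after time reversal, which swaps sinks with sources and forward attraction with forward repulsion.

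I expect the main subtlety to lie in the pullback direction, where one must verify that the autonomous-flow initial datum $x_0 - v(t_0)$ remains in the local basin of $y_r^*$ as $t_0\to -\infty$, even when $v(t_0)$ is unbounded. In each of the examples in Sections \ref{Section-SN}, \ref{Section-SN-local}, and \ref{Section-unbounded-n} this holds because the basin is a large half-space or full component of phase space through which the shift $v$ escapes, but the general statement implicitly requires a local invariance condition in the spirit of forward inflowing stability. Once this localization is recorded, the proposition reduces to the classical fact that an affine, time-dependent change of dependent variables preserves the stability type of constant trajectories of the transformed system.
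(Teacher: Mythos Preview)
Your argument is essentially the paper's: pass through the isometric conjugacy $x=y+v(t)$, note that $x^*(t)=y_r^*+v(t)$ solves \eqref{EQ-system-proof}, and read off forward and pullback attraction from the sink property of $y_r^*$, with the converse handled by contraposition via the unstable manifold. Your remark that the variational equation along $x^*$ is the constant-coefficient system $\dot\xi=Dg(y_r^*)\xi$ is a nice addition the paper does not make explicit.

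The worry in your final paragraph, however, is misplaced, and no forward-inflowing-stability hypothesis is needed. The paper avoids the difficulty not by controlling the basin geometry but by letting the pullback initial datum move with the attractor: it writes $x_s=y_0+v(s)$ with $y_0\in\mathcal B(y_r^*)$ \emph{fixed}, so that $x_s\in\mathbf B(s):=\mathcal B(y_r^*)+v(s)$ for every $s$. Then $y_0=x_s-v(s)$ lies in the basin by construction, independently of how large $v(s)$ becomes, and the autonomous convergence $y(t)\to y_r^*$ as $t-s\to\infty$ gives the pullback limit directly. This is the standard reading of \emph{local} pullback attraction---the $\delta$-neighborhood is taken around $\gamma_P$ at the \emph{initial} time $s$, not at the target time $t$---even though the informal definition in \eqref{EQ-pullback-attract} is phrased loosely enough to suggest otherwise. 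Once you adopt that convention, the obstruction you flagged evaporates, and the argument goes through exactly as you outlined.
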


\begin{proof}
Let $v(t)\in\mathbb R^N$ be defined as in the statement of Proposition 2 and let $y(t)$ be a solution to \eqref{EQ-comoving}.  Then $x(t)=y(t)+v(t)$ is a solution to \eqref{EQ-system-proof} and for each hyperbolic equilibrium $y_{r}^*$  of \eqref{EQ-comoving},  $x^*(t)=y_{r}^*+v(t)$ is a globally defined solution to the original system. By a similar argument, given any solution $x(t;x_0,t_0)$ to the nonautonomous system, $y(t)=x(t;x_0,t_0)-v(t)$ is a solution to the autonomous co-moving system.

Suppose $y_{r}^*$ is a sink and take a ball $\mathcal B(y_{r}^*)$ so that all solutions with initial conditions in $\mathcal B(y_{r}^*)$ converge to $y_{r}^*$. Let $\mathbf B(t)=\mathcal B(y_{r}^*)+v(t)$ so that $\mathbf B(t)$ is the ball $\mathcal B(y_{r}^*)$ in the moving system. Fix $t\in \mathbb R$.  Given a solution $x(\tau;x_0,t_0)$ to the nonautonomous system with $t_0<t$ and initial condition $x(t_0;x_0,t_0)=x_0\in\mathbf B(t_0)$, there is a corresponding solution $y(\tau)=x(\tau;x_0,t_0)-v(\tau)$ to the autonomous co-moving system.  Furthermore, since $x(t_0;x_0,t_0)=x_0\in\mathbf B(t_0)$, we have that $y(t_0)=y_0\in\mathcal B(y_{r}^*)$. 

This motivates us to write $x_0=y_0+v(t_0)$ so that we may ``pull back'' the initial condition $x_0$ to $x_s=y_0+v(s)$ with $s<t_0$. Then the solution to the nonautonomous system through the point $x_s$ at time $s$, $x(\tau;x_s,s)$ corresponds to the solution to the co-moving autonomous system which has initial condition $y_0$ at time $s$. Due to time invariance of autonomous systems, the distance between $y(t)$ and $y_{r}^*$ is determined by the difference between the current time $t$ and the initial time $s$, thus we see that 
\begin{align*}
0=\lim_{s\to-\infty}|y(t)-y_{r}^*|=\lim_{s\to-\infty}|x(t;x_s,s)-x^*(t)|.
\end{align*}
The forward attraction of $x^*(t)$ follows from the asymptotic stability property of $y_{r}^*$.

To show the other direction we will show that if $y_{r}^*$ is not a sink, then the corresponding globally defined solution $x^*(t)$ of the nonautonoumous system is not forward attracting.

Suppose $y_{r}^*$ is not a sink.  Since we are assuming $y_{r}^*$ is a hyperbolic equilibrium, it must be that $y_{r}^*$  has an unstable manifold of dimension at least 1. 
Let $\phi(x_0,t)$ denote the flow associated with the autonomous system \eqref{EQ-comoving} and $\mathcal W^s(y_{r}^*)$ (respectively $\mathcal W^u(y_{r}^*)$) denote the stable (unstable) manifold of $y_{r}^*$. Let $\mathcal B(y_{r}^*)$ be a ball  containing $y_{r}^*$ satisfying the conditions that (1) its closure contains no other invariant sets,  (2) $\phi(\mathcal B(y_{r}^*),t)$ limits to $\mathcal W^u(y_{r}^*)$ in forward time, and (3) $\mathcal W^s(y_{r}^*)$ in backward time (or if $y_{r}^*$ is a source, then  $\phi(\mathcal B(y_{r}^*),t)$ limits to $y_{r}^*$ in backward time).  These criteria ensure that $\mathcal B(y_{r}^*)$ does not intersect with some other invariant set of the system, e.g. a periodic orbit.  Such a ball is possible to find because hyperbolic equilibria are isolated invariant sets. 

As before, let $\mathbf B(t)= \mathcal B(y_{r}^*)+v(t)$ and now also define $\mathbf W^s(t)=\mathcal W^s(y_{r}^*)+v(t)$ and $\mathbf W^u(t)=\mathcal W^u(y_{r}^*)+v(t)$. Given a solution $x(\tau;x_0,t_0)\not=x^*(t)$ to the nonautonomous system with initial condition $x(t_0;x_0,t_0)=x_0\in\mathbf B(t_0)\cap (\mathbf W^s(t_0))^c$, there is a corresponding solution $y(\tau)=x(\tau;x_0,t_0)-v(\tau)$ to the autonomous co-moving system with $y(t_0)=y_0\in\mathcal B(y_{r}^*)\cap (\mathcal W^s(y_{r}^*))^c$. Then there is some time $T$ so that  for $t>T$, $y(t)$ is no longer in the ball $\mathcal B(y_{r}^*)$ and, thus, $x(t;x_0,t_0)$ is no longer in $\mathbf B(t)$.  Since the globally defined solution $x^*(t)$ is in $\mathbf B(t)$ for all time, $x(t;x_0,t_0)$ does not limit to $x^*(t)$ in forward time.

Showing part (b) follows in a similar manner.  
\end{proof}

From the above proof, we see that  $y_{r}^*$ has an unstable manifold of dimension at least 1 if and only if the corresponding globally defined solution of \eqref{EQ-system-proof} is not forward attracting. 
Then a bifurcation in the autonomous system caused by varying the rate parameter which causes at least one of the dimensions of the stable manifold of a sink $y_{r}^*$ to become unstable is realized in the nonautonomous system as the pullback attractor losing its forward stability. This is a rate-dependent critical transition that we see in the system as solutions in the system will  converge to a different attractor or grow unbounded. Since this transition  corresponds to a bifurcation in the co-moving autonomous system, all the regular co-dimension bifurcation theory from autonomous systems for hyperbolic equilibria apply. Indeed, we see that tipping may be caused by nonautonomous saddle-node, pitchfork, transcritical, or Hopf bifurcations.  The example in Section \ref{Section-SN} is a nonautonomous saddle node bifurcation as are the examples of tipping given by Kuehn and Longo\cite{Kuehn2020} and Longo et al.\cite{Longo2021} The example in Section \ref{Section-unbounded-n} is a nonautonomous pitchfork bifurcation. Ashwin et al\cite{Ashwin2012} and later Hahn\cite{Hahn2017} analyzed rate-induced tipping in nonautonomous systems where the co-moving autonomous system has a Hopf bifurcation.

\section{Final Remarks}

Here we have proposed a new definition of rate-induced tipping which we find to be more inclusive of rate-dependent critical transitions for deterministic, continuous time systems.  We have restricted our discussion to nonautonomous systems whose corresponding autonomous systems have only hyperbolic fixed points. More work is needed to understand if the ideas presented here could be generalized to other types of systems, for example where the corresponding autonomous system has a periodic orbit or more complicated behavior such as a strange attractor.

Although the examples discussed in Section \ref{Section-unbounded} are far from exhaustive, we nonetheless find them to be illustrative of behaviors that systems with unbounded parameter change may exhibit as their parameters are varied in time. In particular, a pullback attractor not endpoint tracking a stable QSE is not indicative of a rate-dependent critical transition in systems with unbounded parameter change.  This is the main motivator for our call for a new definition of rate-induced tipping.

\begin{acknowledgments}
A portion of this work was completed during A.H-L.'s sabbatical which was partially supported by the Hutchcroft Fund and the Mathematics and Statistics Department at Mount Holyoke College.  The research of A.N.N. was supported by an NSF Mathematical Sciences Postdoctoral Research Fellowship, Award Number DMS-1902887. Both authors would like to acknowledge support by the AMS Mathematics Research Communities, Award Number DMS-1321794.
\end{acknowledgments}

\section*{Data Availability}

The data that support the findings of this study are almost all available within the article. Any data that are not available can be found from the corresponding author upon  reasonable request.

\nocite{*}
\bibliography{tipping-bib}

\end{document}